\def\sqr#1#2{{\vcenter{\hrule height.#2pt
        \hbox{\vrule width.#2pt height#1pt \kern#1pt
                \vrule width.#2pt}
        \hrule height.#2pt}}}
\newtheorem{Theorem}{Theorem}[section]
\newtheorem{Proposition}[Theorem]{Proposition}
\newtheorem{Conjecture}[Theorem]{Conjecture}
\newtheorem{Assumptions and Discussion}[Theorem]{Assumptions and Discussion}
\newtheorem{Construction}[Theorem]{Construction}
\newtheorem{Remark}[Theorem]{Remark}
\newtheorem{Definition}[Theorem]{Definition}
\def\m{{\mathfrak m}}
\def\q{{\mathfrak q}}
\def\p{{\mathfrak p}}
\newcommand{\ovl}[1]{\overline{#1}}
\newcommand{\wdt}[1]{\widetilde{#1}}
\newcommand{\symp}[1]{#1^{(\ell)}}
\newcommand{\supp}[1]{{\rm supp}(#1)}
\def\ZZ{{\mathbb Z}}
\def\NN{{\mathbb N}}
\def\kk{{\mathbb K}}
\def\Tor{{\textrm{Tor}}}
\newcommand{\B}{\mathcal{B}}
\newcommand{\F}{\mathcal{F}}
\def\Llra{\Longleftrightarrow}
\def\Lra{\Longrightarrow}
\newcommand{\be}{\begin{equation*}}
\newcommand{\ee}{\end{equation*}}
\newcommand{\bee}{\begin{equation}}
\newcommand{\eee}{\end{equation}}
\def\h{{\rm ht}}
\def\grade{{\rm grade}}
\def\Ass{{\rm Ass}}
\def\Min{{\rm Min}}
\def\pd{{\rm pd}}
\newcommand\res{\mathbb{F}} 
\newcommand{\Cmatroid}{{$C$-matroidal }}
\newcommand{\Cmatroidal}{{$C$-matroidal }}
\newcommand{\M}{\mathcal{M}} 
\title{Slightly mixed symbolic powers of matroids are locally glicci} 
\author[Paolo Mantero and Vinh Nguyen]{Paolo Mantero and Vinh Nguyen}
\begin{document}

\begin{abstract}
Let $\M$ be a matroid, and let $I_{\M}$ be either the Stanley--Reisner or the cover ideal of $\M$. In this paper we prove that for any matroid $\M$ on $[n]$, any $\ell\in \ZZ_+$, and any squarefree monomial $N\in R=\kk[x_1,\ldots,x_n]$, the ideal $I_{\M}^{(\ell)}:N$, which we call a ``slightly mixed symbolic power" of $I_{\M}$,  is always Cohen--Macaulay and locally glicci.
As a corollary, we obtain that all symbolic powers $I_{\M}^{(\ell)}$ are locally glicci. 
\end{abstract}

\maketitle
\section{Introduction}
A central open question in the theory of Gorenstein liaison is whether all Cohen--Macaulay ideals in a polynomial ring over a field are {\em glicci}, i.e. in the Gorenstein liaison class of a complete intersection \cite[Question~8.1]{KMMNP}. A large body of literature is dedicated to this problem, and various classes of Cohen--Macaulay ideals, in algebraic and geometric settings, have been shown to be glicci, see for instance \cite{CDSRVT}, \cite{CMR00}, \cite{CMR01}, \cite{DG09}, \cite{EHS}, \cite{Gaeta}, \cite{Gorla05}, \cite{Gorla07}, \cite{Gorla08}, \cite{Gorla10}, \cite{HHU}, \cite{HSS}, \cite{KR} and references therein. 

In addition to the above, strong evidence for a positive answer was provided. For instance, Casanellas, Drozd, and Hartshorne proved that all Gorenstein ideals are glicci \cite{CDH05}. Also, Migliore and Nagel proved that any Cohen--Macaulay generically Gorenstein ideal is glicci when viewed as an ideal in a polynomial ring with one more variable \cite{MN13}. 

On the other hand, the question is wide-open even for monomial ideals.

\begin{Conjecture}\label{Conj-glicci}
Let $\kk$ be a field.	If $I$ is a Cohen--Macaulay monomial ideal in $R=\kk[x_1,\ldots,x_n]$, then $I$ is glicci.
\end{Conjecture}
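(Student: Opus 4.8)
\noindent The plan is to reduce Conjecture~\ref{Conj-glicci} to the squarefree case and then apply the Migliore--Nagel theorem \cite{MN13}, while removing the auxiliary variable that theorem introduces.

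\emph{Reduction to Stanley--Reisner ideals.} Let $I\subseteq R$ be a Cohen--Macaulay monomial ideal and $I^{\mathrm{pol}}\subseteq R'=\kk[x_1,\dots,x_N]$ its polarization; polarization preserves Cohen--Macaulayness, and $R/I=R'/(I^{\mathrm{pol}}+(\underline{\ell}))$, where $\underline{\ell}$ is the regular sequence of linear forms $x_j-x_k$ recording the depolarization. The first task is to show that passing from $I^{\mathrm{pol}}$ to $I$ preserves the glicci property: each depolarization step is a hyperplane section by an $R'/I^{\mathrm{pol}}$--regular linear form, and one wants every Gorenstein link in a glicci chain for $I^{\mathrm{pol}}$ to descend under this section. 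Because the polarization variables may be chosen freely and the whole picture is $(\kk^\times)^N$--equivariant, a general-position argument --- in the spirit of the standard reductions between $\PP^n$ and $\PP^{n-1}$ in liaison theory --- should yield this. Granting it, we may assume $I=I_\Delta$ is the Stanley--Reisner ideal of a Cohen--Macaulay simplicial complex $\Delta$ on $[n]$.

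\emph{Automatic generic Gorenstein-ness.} The ring $R/I_\Delta$ is reduced; its associated primes are the monomial primes $\mathfrak p_F=(x_i : i\notin F)$ of the facets $F$ of $\Delta$, and each localization $(R/I_\Delta)_{\mathfrak p_F}$ is a field, hence Gorenstein. Thus $R/I_\Delta$ is Cohen--Macaulay and generically Gorenstein with no further hypothesis, and \cite{MN13} applies: $I_\Delta\, R[t]$ is glicci in $R[t]=\kk[x_1,\dots,x_n,t]$.

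\emph{Removing $t$ --- the main obstacle.} It remains to conclude that $I_\Delta$ itself is glicci in $R$. That $IS[t]$ glicci forces $I$ glicci is not known in general, so this descent --- or its circumvention --- is the crux of the matter. The plan is to rerun the Migliore--Nagel construction directly in $R$: it links $I$ through an arithmetically Gorenstein ideal to a generically complete intersection, and uses the extra variable only to secure a sufficiently general hyperplane section and a Gorenstein ``bridge'' of the correct codimension. For a monomial ideal one has at one's disposal the torus action, an explicit combinatorial description of the canonical module $\omega_{R/I_\Delta}$ (as a $\ZZ^n$--graded module, via Hochster's formula), and Reisner's criterion; the aim is to exploit this rigidity to build the Gorenstein bridge $\mathfrak g\subseteq I_\Delta$ and the ensuing chain of elementary $G$--biliaisons inside $R$ alone. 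Concretely, one wants to show that every Cohen--Macaulay simplicial complex admits a finite sequence of elementary $G$--biliaison steps ending at a simplex. When $\Delta$ is vertex decomposable --- the case that covers matroids and the polarizations occurring in this paper --- these steps are the classical Stanley--Reisner basic double links along a shedding vertex, and everything is elementary; the hard part, on which Conjecture~\ref{Conj-glicci} ultimately rests, is to treat Cohen--Macaulay complexes that are not even shellable, with Rudin's and Ziegler's non-shellable balls as the benchmark obstructions.
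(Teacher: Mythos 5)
The statement you are trying to prove is \Cref{Conj-glicci}, which is an \emph{open conjecture}: the paper does not prove it, and only establishes special cases (slightly mixed symbolic powers of \Cmatroidal ideals, locally at $\m$). Your proposal is likewise not a proof but a research program, and both of its reduction steps contain genuine gaps. First, the claim that glicci-ness of $I^{\mathrm{pol}}$ descends to $I$ under the depolarizing hyperplane sections is not something a ``general-position argument'' is known to deliver; descent of the glicci property under hyperplane sections (even regular linear ones) is precisely one of the notorious open points in Gorenstein liaison, and it is exactly why the depolarization machinery of \cite{FKRS} cannot be combined with \cite[Cor.~3.3]{LM25} to prove \cite[Conj.~3.4]{LM25} --- the paper says so explicitly, because the needed generically Gorenstein hypotheses fail along the way. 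So already your reduction to the squarefree case is unsupported. Second, and as you yourself concede, removing the extra variable from the Migliore--Nagel theorem \cite{MN13} (``$IR[t]$ glicci $\Rightarrow$ $I$ glicci'') is not known, and your plan to ``rerun the construction inside $R$'' using the torus action, Hochster's formula and Reisner's criterion is a hope, not an argument: no Gorenstein bridge is constructed, and your closing paragraph admits that Cohen--Macaulay complexes which are not (weakly) vertex decomposable --- the only genuinely new case beyond \cite{NR08} --- remain untouched.

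In short, what you have written reduces the conjecture, modulo an unjustified depolarization-descent step, to the same open problems that make it a conjecture in the first place. By contrast, the paper does not attempt \Cref{Conj-glicci} in general: it sidesteps the generically Gorenstein obstruction for the specific ideals $I_{\M}^{(\ell)}:N$ by replacing the non--generically Gorenstein monomial piece $I$ in a decomposition $J^{(\ell)}=I+yK$ with a non-homogeneous radical lift $I'$ relative to $K$ (Section~\ref{Sec-3}), and then applies local basic double $G$-linkage (\Cref{G-Double-Link}) in $R_\m$ --- which is why its conclusion is only ``locally glicci.'' If you want to salvage part of your proposal, the honest statement you can extract from it is the already-known result of \cite{NR08} for weakly vertex decomposable complexes plus the local Artinian case of \cite{HU07}; neither yields the full conjecture.
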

We single out a few positive results: 
\begin{enumerate}
    \item (Nagel--R\"omer \cite{NR08}) \Cref{Conj-glicci} holds if $I$ is the Stanley--Reisner ideal $I_\Delta$ of a weakly-vertex decomposable simplicial complex $\Delta$. 
       \item (Migliore--Nagel \cite{MN02}) \Cref{Conj-glicci}  holds if $I$ is any Cohen--Macaulay Borel-fixed ideal.
    \item (Huneke--Ulrich \cite{HU07}) If $|\kk|=\infty$, then the local version of \Cref{Conj-glicci} holds for any Artinian monomial ideal, i.e. $I_\m$ is glicci whenever $I$ is an $\m$-primary monomial ideal. Here $\m$ denotes the homogeneous maximal ideal of $R$. 
\end{enumerate}

Notably $(1)$ includes the case where $I$ is a \Cmatroidal ideal, meaning that $I$ is the cover ideal or the Stanley--Reisner ideal of a matroid, see \Cref{Def-Cmatroid}. Symbolic powers of \Cmatroidal ideals are also Cohen--Macaulay. In fact, Minh and Trung, and independently Varbaro, proved that \Cmatroidal ideals are precisely the squarefree monomial ideals whose symbolic powers are all Cohen--Macaulay \cite{TT} \cite{Var}. If $I$ is $C$-matroidal, it is then natural to investigate if $I^{(\ell)}$ is glicci too. In \cite{LM25}, it is proved that the polarization of $I^{(\ell)}$ is glicci for every $\ell$, and it is conjectured that the (symbolic) powers themselves are also glicci \cite[Cor.~3.3 and Conj.~3.4]{LM25}. A particular case of our main theorem proves their conjecture in the local setting. In fact, we first introduce the concept of ``slightly mixed symbolic powers" of a monomial ideal $J$. These are all the ideals of the form $\symp{J}:N$ for some squarefree monomial $N\in R$, see \Cref{slightly-mixed}. Taking $N=1$ one sees that this notion naturally generalizes the symbolic powers of $J$. Our first main result shows that these ideals are Cohen--Macaulay if $J$ is a \Cmatroidal ideal. 

\begin{Theorem}(\Cref{Mix-Sym-CM})
 Let $k$ be any field, $J \subseteq R = \kk[x_1,\ldots,x_n]$ any \Cmatroidal ideal, then $\symp{J}:N$ is Cohen--Macaulay, for any $\ell\in \ZZ_+$, and  any squarefree monomial $N$.  
\end{Theorem}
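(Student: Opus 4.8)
The plan is to reduce to $J=I_\M$ (the cover ideal of a matroid is the Stanley--Reisner ideal of the dual matroid, so it suffices to treat Stanley--Reisner ideals), and then to run an induction. Writing the bases of $\M$ (say of rank $r$, on $[n]$) as $B$ and $P_{B^c}=(x_i:i\notin B)$ for the associated minimal prime of $I_\M$, I would first record the combinatorial shape
\[
\symp{I_\M}:N\;=\;\bigcap_{B}\,P_{B^c}^{\,c_B},\qquad c_B:=\bigl(\ell-|\supp{N}\setminus B|\bigr)_+ ,
\]
which follows from $\symp{I_\M}=\bigcap_BP_{B^c}^{\ell}$ together with $P^{\ell}:x_i=P^{\ell-1}$ for $x_i\in P$ (and $=P^{\ell}$ otherwise). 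Since the $P_{B^c}$ are pairwise incomparable of height $n-r$, this ideal is either the unit ideal (all $c_B=0$) or unmixed of height $n-r$; in the latter case $\dim R/(\symp{I_\M}:N)=r$, so it suffices to prove $\depth R/(\symp{I_\M}:N)\ge r$.

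I would then induct on the pair $(n,\sum_Bc_B)$ in the lexicographic order. The base case is $\supp{N}=[n]$: then every $c_B$ equals $(\ell-n+r)_+$, so $\symp{I_\M}:N=I_\M^{((\ell-n+r)_+)}$ is an honest symbolic power of the $C$-matroidal ideal $I_\M$, hence Cohen--Macaulay by \cite{TT,Var}. Otherwise pick $e\in[n]\setminus\supp{N}$. If every basis $B$ with $c_B>0$ contains $e$ (in particular if $e$ is a coloop), then $x_e$ avoids all associated primes of $\symp{I_\M}:N$ and the ideal is generated by monomials free of $x_e$; one then gets $R/(\symp{I_\M}:N)\cong\bigl(R'/(\symp{I_{\M/e}}:N)\bigr)[x_e]$ with $R'=\kk[x_i:i\ne e]$, and we are done because $\M/e$ lives on $n-1$ elements and a polynomial extension of a Cohen--Macaulay ring is Cohen--Macaulay.

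The interesting case is when $e$ is not a coloop and some basis $B_0\not\ni e$ has $c_{B_0}\ge1$. Here I would use the short exact sequence
\[
0\longrightarrow R/(\symp{I_\M}:Nx_e)\stackrel{\,\cdot x_e\,}{\longrightarrow} R/(\symp{I_\M}:N)\longrightarrow R/\bigl(\symp{I_\M}:N+(x_e)\bigr)\longrightarrow 0 .
\]
The left term is again of the prescribed form, for the squarefree monomial $Nx_e$: the matroid and $n$ are unchanged, but $c_{B_0}$ drops by $1$ while no $c_B$ increases, so $\sum_Bc_B$ strictly decreases and the inductive hypothesis makes it Cohen--Macaulay of dimension $r$ (or the zero module). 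For the right term, reducing modulo $x_e$ and sorting the bases of $\M$ according to whether they contain $e$ identifies $\symp{I_\M}:N+(x_e)$ with $(\symp{I_{\M\setminus e}}:N)\cap(\symp{I_{\M/e}}:N)$ inside $R'$; and a basis-exchange argument — an $(r-1)$-subset that is a basis of $\M/e$ extends to an $e$-avoiding basis of $\M$, so every complement of a basis of $\M/e$ contains the complement (taken in $[n]\setminus e$) of a basis of $\M\setminus e$ — gives $\symp{I_{\M\setminus e}}\subseteq\symp{I_{\M/e}}$, collapsing that intersection to $\symp{I_{\M\setminus e}}:N$. Since $e$ is not a coloop, $\M\setminus e$ has rank $r$ on $n-1$ elements, so $R'/(\symp{I_{\M\setminus e}}:N)$ is Cohen--Macaulay of dimension $r$ by induction. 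Feeding both outer modules into the depth lemma yields $\depth R/(\symp{I_\M}:N)\ge r=\dim R/(\symp{I_\M}:N)$; the degenerate possibilities (one of the outer modules zero) are immediate from the sequence.

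The hard part, I expect, is precisely this identification of the ``deletion'' quotient $R/(\symp{I_\M}:N+(x_e))$ with a slightly mixed symbolic power of the minor $\M\setminus e$: a priori it is only an intersection that also carries contractions of $\symp{I_{\M/e}}$, and disposing of that extra piece genuinely uses the matroid structure through the containment $\symp{I_{\M\setminus e}}\subseteq\symp{I_{\M/e}}$ — Cohen--Macaulayness of the individual primary components $P_{B^c}^{c_B}$ is not enough. A second point needing care is that the colon step $N\mapsto Nx_e$ \emph{enlarges} $\supp{N}$, so the recursion must be driven by the descending quantity $\sum_Bc_B$ rather than by $|\supp{N}|$; one must also keep track of heights and dimensions so that the depth lemma outputs Cohen--Macaulayness of the middle term and not merely a bound on its depth.
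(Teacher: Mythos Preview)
Your argument is correct. The overall architecture matches the paper's: both proofs use the short exact sequence arising from multiplication by a variable and then identify the quotient modulo that variable with a slightly mixed symbolic power of a matroid minor in one fewer variable. The core matroid fact you isolate --- that when $e$ is not a coloop every basis of $\M/e$ extends to a basis of $\M\setminus e$, giving $I_{\M\setminus e}^{(\ell)}\subseteq I_{\M/e}^{(\ell)}$ --- is exactly what is needed, and is the same fact the paper uses (implicitly) to justify $J^{(\ell)}\cap R_0=(J\cap R_0)^{(\ell)}$.

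The genuine difference is the direction of the induction. The paper picks a variable $y\in\supp{N}$, sets $N'=N/y$, and runs a single induction on $\deg N$: in the sequence $0\to R/(I:N)\to R/(I:N')\to R/(I:N',y)\to 0$ the middle term is Cohen--Macaulay by induction and the right term is a slightly mixed symbolic power of $J\cap R_0$ in $R_0$ (with colon monomial $N'$ of strictly smaller degree), so the left term is Cohen--Macaulay. The base case is $N=1$, where Minh--Trung/Varbaro applies directly. You instead pick $e\notin\supp{N}$, enlarge $N$ to $Nx_e$, and induct on $(n,\sum_B c_B)$; your base case is $\supp{N}=[n]$, which reduces to an ordinary symbolic power. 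Both schemes terminate for the same reason, and both rest on the same restriction identity, but the paper's packaging is a bit leaner: the single identity $(I:_R N)\cap R_0=(I\cap R_0):_{R_0}N$ (for $y\notin\supp{N}$) replaces your separate deletion/contraction bookkeeping, and one parameter suffices. On the other hand, your write-up makes the role of the matroid structure --- the basis-exchange step that collapses the deletion/contraction intersection --- completely explicit, whereas the paper passes over it as an ``observation''.
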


Our second main result closes the above question, \cref{Conj-glicci}, locally for symbolic powers of squarefree monomial ideals. Indeed, we prove the local version of \Cref{Conj-glicci} for all slightly mixed symbolic powers of \Cmatroidal ideals, and thus, the local version of  \cite[Conj.~3.4]{LM25}. 

\begin{Theorem}(\Cref{Locally-Glicci})
Assume $|\kk|=\infty$. Let $J \subseteq R = \kk[x_1,\ldots,x_n]$ be any \Cmatroidal ideal, and let $\m = (x_1,...,x_n)$. 
Then $\symp{J}:N$ is glicci in $R_\m$ for any $\ell\in \ZZ_+$, and  any squarefree monomial $N$. In particular, $J^{(\ell)}$ is glicci in $R_{\m}$, for any $\ell \geq 1$.  
\end{Theorem}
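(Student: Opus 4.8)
The plan is to work in the local ring $R_\m$ throughout, where the machinery of Gorenstein liaison — in particular basic double $G$-linkage and ascending elementary $G$-biliaison — is available, and where adjoining a variable does not disturb the property of being glicci (a complete intersection and a $G$-link over $\kk[x_1,\dots,x_{n-1}]_{(x_1,\dots,x_{n-1})}$ extend to a complete intersection and a $G$-link over $R_\m$). By \Cref{Mix-Sym-CM} the ideal $\symp{J}:N$ is Cohen--Macaulay, and writing $J=\bigcap_i\p_i$ for its squarefree monomial minimal primes one has $\symp{J}:N=\bigcap_i\p_i^{a_i}$ with $a_i=\max\{\,\ell-|\supp{N}\cap\supp{\p_i}|,\ 0\,\}$, so this ideal is equidimensional with no embedded primes. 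We induct on a complexity measure of $\symp{J}:N$ — say the pair consisting of the number of variables actually occurring and the integer $\sum_i a_i$, ordered lexicographically — the base case being the one in which the recursion below can no longer be applied.

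For the inductive step, passing to the dual matroid if necessary we may assume $J=I_\M$ is the Stanley--Reisner ideal of a matroid $\M$ on $[n]$. Choose, when possible, a variable $x_j$ that occurs in $\symp{J}:N$, does not divide $N$, and is not a coloop of $\M$, and use the decomposition, valid for any monomial ideal,
\[
  \symp{J}:N \;=\; x_j\bigl(\symp{J}:Nx_j\bigr)\;+\;\mathfrak c,
\]
where $\mathfrak c$ is generated by the monomial generators of $\symp{J}:N$ not divisible by $x_j$. The key combinatorial point is that $\mathfrak c$ is (the extension of) a slightly mixed symbolic power of the \Cmatroidal ideal $I_{\M\setminus j}$ in the variables $\{x_i:i\neq j\}$; granting this, $R_\m/\mathfrak c$ is Cohen--Macaulay by \Cref{Mix-Sym-CM}, $\codim\mathfrak c=\codim(\symp{J}:N)-1$, and $x_j$ is a nonzerodivisor modulo $\mathfrak c$, so the displayed identity realizes $\symp{J}:N$ as a basic double $G$-link of $\symp{J}:Nx_j$ along $\mathfrak c$. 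Since $\symp{J}:Nx_j$ has strictly smaller complexity, it is glicci in $R_\m$ by induction, and therefore so is $\symp{J}:N$, because a basic double $G$-link preserves the Gorenstein liaison class. (For $\ell=1$, $N=1$ this is precisely the classical splitting $I_\M=x_jI_{\M/j}+I_{\M\setminus j}$ underlying the Nagel--R\"omer treatment of vertex-decomposable complexes, with $\mathfrak c=I_{\M\setminus j}$ radical.)

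When no such $x_j$ is available — more precisely, when the recursion can no longer be continued within the class of slightly mixed symbolic powers of \Cmatroidal ideals — the ideal $\symp{J}:N$ is, after discarding the variables that do not occur, an $\m$-primary monomial ideal; the extreme case is $\M$ of rank $0$, where $\symp{J}:N=\m^{\ell}:N$ outright. In this situation the Huneke--Ulrich theorem, item~(3) of the introduction — the only place the hypothesis $|\kk|=\infty$ is used — guarantees that $\symp{J}:N$ is glicci in the relevant local ring, and adjoining back the remaining variables keeps it glicci in $R_\m$. This closes the induction. Taking $N=1$ gives that $J^{(\ell)}$ is glicci in $R_\m$ for every $\ell\ge1$, and in particular the local form of \cite[Conj.~3.4]{LM25}.

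The technical heart, and the step I expect to be the main obstacle, is making the recursion run cleanly: one must pin down the precise slightly mixed symbolic powers of $I_{\M\setminus j}$ (and of contractions of $\M$) that occur, verify the codimension bookkeeping, and — crucially — ensure that the genericity hypothesis needed for the splitting to be a \emph{Gorenstein} link (rather than merely a complete-intersection biliaison) is met by $R_\m/\mathfrak c$; this can fail for an arbitrary slightly mixed symbolic power and has to be arranged by a judicious choice of $j$ (and perhaps a more refined decomposition than the naive one above), with the residual configurations being exactly those routed into the Huneke--Ulrich base case. Cohen--Macaulayness of every ideal that arises is, by contrast, never in question, being delivered uniformly by \Cref{Mix-Sym-CM}.
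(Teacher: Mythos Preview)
Your overall architecture---induct on a complexity measure, split off a variable, write the ideal as $x_j\cdot(\text{colon})+\mathfrak c$, and invoke basic double $G$-linkage---matches the paper exactly, and your identification of $\mathfrak c$ as (the extension of) a slightly mixed symbolic power of the deletion $I_{\M\setminus j}$ is correct and is precisely how \Cref{Mix-Sym-CM} enters. So the skeleton is right.

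The genuine gap is the one you flag in your last paragraph but do not resolve: the ideal $\mathfrak c$ is \emph{essentially never} generically Gorenstein once $\ell\ge 2$. Its primary components are powers $\p^{a}$ with $a\ge 2$ of height $\ge 2$ primes, and $(R/\p^{a})_\p$ is not Gorenstein, so no ``judicious choice of $j$'' fixes this. Your proposed escape hatch---route the bad cases to the Huneke--Ulrich theorem---does not close the gap either: that result applies only to $\m$-primary monomial ideals, whereas for a matroid of positive corank the slightly mixed symbolic powers are never Artinian, and one cannot reach the Artinian range by iterating the deletion step. Thus the recursion stalls at the very first step for any $\ell\ge 2$ and any matroid that is not already rank~$0$.

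The paper's fix is quite different from anything in your outline. Rather than searching for a variable for which $\mathfrak c$ happens to be generically Gorenstein, the authors \emph{replace} $\mathfrak c$ by a non-homogeneous ideal $I'$ with the same image modulo $x_j$: they lift each monomial generator $\prod x_i^{a_i}$ of $\mathfrak c$ to a product $\prod_{i,k}(x_i+x_jf_{i,k})$ where the $f_{i,k}$ are general $\kk$-linear combinations of generators of $K=\symp{J}:Nx_j$. The point is that $I'+x_jK=\mathfrak c+x_jK$ (so the basic double link is unchanged), $R_\m/I'_\m$ is Cohen--Macaulay (its minimal free resolution is a lift of that of $\mathfrak c$), and---crucially---$I'$ is \emph{radical} because $\h K>\h\mathfrak c$ forces each minimal prime of $I'$ to contain exactly one factor $x_i+x_jf_{i,k}$ from each relevant row. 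Radical implies generically Gorenstein, and \Cref{G-Double-Link} applies. This lifting construction (Section~3 of the paper) is the missing idea; the infinite-field hypothesis is used here, to guarantee existence of the general coefficients, and the paper never invokes Huneke--Ulrich at all---the base case is simply $\sum a_\p=1$, a complete intersection.
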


Our approach is to use  double $G$-linkage in an inductive manner by factoring out a variable $y$ at each step. This requires writing $J^{(\ell)} = I + yK$ for ideals $I,K$, where $I$ is Cohen--Macaulay and generically Gorenstein and $y$ is regular on $R/I$. However, there is a strong obstruction to this method. Normally, one takes $I$ to be the ideal generated by the monomials of $J^{(\ell)}$ not divisible by $y$. One can prove that such an ideal $I$ is the symbolic power of another \Cmatroidal ideal. Thus in particular $I$ is Cohen--Macaulay, but unfortunately, it is essentially never generically Gorenstein. In fact, this is why the general depolarization result of \cite{FKRS} cannot be applied to \cite[Cor.~3.3]{LM25} to prove \cite[Conj.~3.4]{LM25}. 

To overcome this difficulty, we construct a {\em Cohen--Macaulay and generically Gorenstein} ideal $I'$ so that $J^{(\ell)}= I+yK= I' +yK$, for some ideal $K$ and $y$ is regular on $R/I'$. Our method is
inspired by techniques used by Hartshorne \cite{Hartshorne66}, Geramita--Gregory--Roberts \cite{GGR86}, Migliore--Nagel \cite{MN00}, and Huneke--Ulrich \cite{HU07}. However, there are two main differences compared to these other constructions. First, our construction is very general, in the sense that it allows the lift of an ideal $I$ using general elements from any homogeneous ideal $K$. Secondly, the lifts in the above-mentioned results are all homogeneous ideals, while our lifts $I'$ of $I$ are no longer homogeneous. However, our lifts $I'$ still remain inside $\m$, explaining why we work in $R_\m$.   

The paper is structured as follows. \cref{Sec-2} discusses notation and preliminaries on matroids and Gorenstein liaison. In \cref{Sec-3}, we develop the lifting technique, and in \cref{Sec-4} we prove our main results.

\section{Preliminaries}\label{Sec-2}

In this short section we establish some notation and recall some  background on matroids and Gorenstein liaison. Throughout the paper we use the notation 
$$R = \kk[x_1,...,x_n] \qquad \text{ and } \qquad \m=(x_1,\ldots,x_n).$$

We briefly provide the definition of matroids and refer to \cite{Oxley} or \cite{Welsh} for further reading on matroids. Throughout we make the following shorthand, for a set $F$ and element $v$ we write $F - v$ for $F - \{v\}$ and $F \cup v$ for $F \cup \{v\}$.

\begin{Definition} A matroid $\M$ on a ground set $V$ consists of a non-empty collection $\B$ of subsets of $V$, such that $\B$ satisfies the following exchange axiom. For any $F,G \in \B$ and for any $v \in F - G$, there exist a $w \in G - F$ such that $(F - v) \cup w \in \B$. Elements of $\B$ are called the basis of $\M$.
\end{Definition}

By identifying a matroid with its independence complex, we may consider every matroid to be a simplicial complex (set the the basis of the matroid to be the facets of the simplicial complex). We now define two monomial ideals associated to simplicial complexes, and hence also to matroids.

\begin{Definition} Let $\Delta$ be a simplicial complex on $[n]$. Denote the set of facets of $\Delta$ as $\F(\Delta)$. For $F  \subseteq [n]$ we let $\p_F = ( x_i : i \in F) \subset R$. Then the Stanley--Reisner ideal $I_\Delta$ and the cover ideal $J(\Delta)$ associated to $\Delta$ are defined as 
$$I_\Delta = \bigcap_{[n] - F\in \F(\Delta)}\p_F \qquad \text{ and }\qquad J(\Delta) = \bigcap_{F\in \F(\Delta)}\p_F.$$
\end{Definition}

\begin{Definition}\label{Def-Cmatroid} 
An ideal $J\subseteq R$ is \Cmatroid if it is the Stanley--Reisner or the cover ideal of a matroid (see \cite[Def.~2.1]{MN1} for the significance of the ``$C$-" adornment).
\end{Definition}

\begin{Definition} 
Given a monomial $M\in R$ and a monomial ideal $J\subseteq R$, we let
\begin{itemize}
    \item $G(J)$ denote the set of minimal monomial generators of $J$;
    \item their {\em supports} to be the sets  
$$\supp{M} = \{x_i \,\mid\, x_i \text{ divides } M\}\quad \text{ and } \quad\supp{J}=\bigcup_{M\in G(J)}\supp{M}.$$
\end{itemize}
\end{Definition}

We now provide the definition of Gorenstein linkage. We refer to \cite{KMMNP} for further reading on Gorenstein liaison.

\begin{Definition} 
Two unmixed ideals $I,J \subset R$ of height $c$ are said to be {\em $G$-linked} if there is a Gorenstein ideal $H \subset I \cap J$ of height $c$ such that $J = H : I$.

The {\em Gorenstein liaison class} of $I$ consists of all ideals that are $G$-linked to $I$ in finitely many steps. The ideal $I$ is {\em glicci} if it is in the Gorenstein liaison class of a complete intersection.
\end{Definition}

We conclude with a local version of basic double $G$-linkage.

\begin{Theorem}\label{G-Double-Link}(\cite[Prop.~5.10]{KMMNP}, \cite[Prop.~4.1]{HU07}) Let $I$ and $K$ be ideals in $\m$, and $y \in \m$. If $R/I$ is Cohen--Macaulay and generically Gorenstein (i.e. $R_\p/I_\p$ is Gorenstein for all $\p\in \Ass(R/I)$), and $I + yK$ is unmixed of height greater than the height of $I$, then $(I+K)_{\m}$ and $(I+yK)_{\m}$ are Gorenstein linked in two steps in $R_{\m}$.    
\end{Theorem}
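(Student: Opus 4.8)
This is the local version of the basic double $G$-linkage lemma of \cite[Prop.~5.10]{KMMNP} and \cite[Prop.~4.1]{HU07}, and I would prove it along the lines of those references, keeping track of where the hypotheses are used. Write $c=\h(I)$. Since $R/I$ is Cohen--Macaulay it is unmixed, so $\h(\p)=c$ for every $\p\in\Ass(R/I)$; if $y$ lay in such a $\p$, then $I+yK\subseteq\p$ would force $\h(I+yK)\le c$, against the hypothesis. Hence $y$ is a nonzerodivisor on $R/I$, so $R/(I+(y))$ is Cohen--Macaulay of codimension $c+1$ and, since $I+yK\subseteq I+(y)$, we get $\h(I+yK)=c+1$. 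Moreover the rule $\overline{r}\mapsto\overline{yr}$ is a well-defined injection $R/(I+K)\hookrightarrow R/(I+yK)$ — injectivity being exactly regularity of $y$ on $R/I$ — fitting into the exact sequence $0\to R/(I+K)\to R/(I+yK)\to R/(I+(y))\to 0$. As the unmixed module $R/(I+yK)$ then contains $R/(I+K)$ as a submodule, $I+K$ is unmixed of codimension $c+1$, and one checks that $R/(I+yK)$ — hence $R/(I+K)$ — is Cohen--Macaulay of codimension $c+1$. Since $I\subseteq I+yK\subseteq I+K\subseteq\m$, it then suffices to exhibit a chain $(I+yK)_\m\sim L\sim (I+K)_\m$ of two direct Gorenstein links in $R_\m$.

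\emph{The key construction.} The one non-formal input, and the only use of the \emph{generically Gorenstein} hypothesis, is the following. Because $R/I$ is Cohen--Macaulay and generically Gorenstein of codimension $c$, its canonical module $\omega_{R/I}$ has rank one and can be realized as a height-one ideal $\overline{J}$ of $R/I$ (if $R/I$ happens to be Gorenstein one instead uses the classical complete-intersection version of basic double linkage). Applying $\mathrm{Hom}_R(-,\omega_R)$ to $0\to\overline{J}\to R/I\to (R/I)/\overline{J}\to 0$, and using the reflexivity isomorphism $\omega_{\omega_{R/I}}\cong R/I$, gives an identification $\omega_{(R/I)/\overline{J}}\cong (R/I)/\overline{J}$; a depth count on the same sequence shows $(R/I)/\overline{J}$ is Cohen--Macaulay, so it is Gorenstein. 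Lifting to $R$, there are ideals $I\subsetneq J$ with $R/J$ Gorenstein of codimension $c+1$. Working in $R_\m$ one may, in addition, choose the canonical ideal general enough that the resulting codimension-$(c+1)$ Gorenstein ideal lies inside a prescribed overideal of $I$ of codimension $c+1$; this is where the freedom to take general elements — hence the passage to $R_\m$, and the non-homogeneity of the lifts — is used. The upshot is a way to manufacture codimension-$(c+1)$ ideals that are \emph{Gorenstein}, not merely Cohen--Macaulay, inside the ideals we wish to link.

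\emph{The two links.} Applying the construction with the overideal $I+yK$ gives a Gorenstein ideal $G_1\subseteq I+yK$ of codimension $c+1$. Since $R/(I+yK)$ is Cohen--Macaulay of codimension $c+1$, the linkage formulas show that $L:=G_1:(I+yK)$ contains $G_1$, has $R/L$ Cohen--Macaulay of codimension $c+1$, and satisfies $G_1:L=I+yK$; so $I+yK$ and $L$ are directly $G$-linked. Running the construction once more, arranged so that a Gorenstein ideal $G_2\subseteq L\cap(I+K)$ of codimension $c+1$ has $I+K=G_2:L$ (equivalently $L=G_2:(I+K)$) — here one uses that both $R/L$ and $R/(I+K)$ are Cohen--Macaulay of codimension $c+1$, and that the construction of $G_2$ parallels that of $G_1$, exploiting that $I+K$ is obtained from $I+yK$ by removing the factor $y$ — produces the second direct $G$-link $L\sim I+K$. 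Concatenating the two, $(I+yK)_\m$ and $(I+K)_\m$ are Gorenstein linked in two steps in $R_\m$, as claimed.

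\emph{Main obstacle.} The delicate point is the construction of the second paragraph: in each of the two links one needs the linking ideal to be simultaneously Gorenstein \emph{and} contained in the two ideals it links, and securing this requires both the canonical-module description of $R/I$ (hence that $R/I$ is generically Gorenstein) and the availability of general elements (hence working in $R_\m$). Everything else is formal manipulation with the linkage formulas and the exact sequences above.
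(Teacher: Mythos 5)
The paper itself does not prove this statement: it is imported verbatim from \cite[Prop.~5.10]{KMMNP} and \cite[Prop.~4.1]{HU07}, so your attempt has to be measured against the standard proof of basic double $G$-linkage. Your preliminary reductions are correct and do match that proof: $y$ must avoid $\Ass(R/I)$ (otherwise $I+yK$ would lie in a prime of height $c:=\h(I)$), so $y$ is a nonzerodivisor on $R/I$; the sequence $0\to R/(I+K)\xrightarrow{\cdot y} R/(I+yK)\to R/(I+(y))\to 0$ is exact; $\h(I+yK)=c+1$; and $\Ass(R/(I+K))\subseteq\Ass(R/(I+yK))$ gives that $I+K$ is unmixed of height $c+1$. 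You also correctly locate the only use of generic Gorensteinness: $R/I$ admits a canonical ideal, and any ideal of $R/I$ isomorphic to $\omega_{R/I}$ has Gorenstein quotient of dimension one less (cf.\ \cite{Bruns-Herzog}).

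There are, however, two genuine gaps. The smaller one: your claim that $R/(I+yK)$, hence $R/(I+K)$, is Cohen--Macaulay is neither a hypothesis nor a consequence of the hypotheses (only unmixedness of $I+yK$ is assumed, and the exact sequence does not upgrade it); fortunately it is also not needed, since unmixedness is exactly what makes the double-colon identity $H:(H:A)=A$ valid for a Gorenstein ideal $H\subseteq A$ of the same height. The serious gap is at the heart of the argument: you produce some Gorenstein $G_1\subseteq I+yK$ and then assert that a second Gorenstein ideal $G_2$ can be ``arranged'' so that its colon returns $I+K$; choosing canonical ideals ``general enough'' does not accomplish this, and nothing in your sketch ties $G_1$ to $G_2$, so all you get is that each of $I+yK$, $I+K$ is linked to \emph{something}, not to a common ideal $L$. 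The actual mechanism is an explicit tie: fix a canonical ideal $\omega\subseteq A:=R/I$, pick a nonzerodivisor $b\in\bar B:=(I+K)/I$ (possible because $\h(I+K)>c$ and $A$ is unmixed), and let $G_1,G_2\subseteq R$ be the preimages of $b\omega$ and $yb\omega$; both quotients are Gorenstein of codimension $c+1$ since $b\omega\cong yb\omega\cong\omega_A$. Then $G_1:(I+K)$ and $G_2:(I+yK)$ are the preimages of $b\omega:_A\bar B$ and $yb\omega:_A y\bar B$, and the elementary computation $yb\omega:_A y\bar B=b\omega:_A\bar B$ (valid because $y$ is a nonzerodivisor on $A$) shows the two direct links land on the same ideal $L$; the double-colon identity for the unmixed ideals $I+K$ and $I+yK$ then closes both links. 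This rests on the identity $(I+yK)/I=y\cdot\bigl((I+K)/I\bigr)$, which your proposal never uses; without it the second link is an assertion, not an argument. Relatedly, ``general elements'' and the passage to $R_\m$ are not what rescue the construction --- one multiplies a fixed canonical ideal by the specific nonzerodivisors $b$ and $yb$; the localization is there because the theorem is applied to non-homogeneous ideals (and so that the canonical-ideal machinery can be run in the local ring), not to make generic choices.
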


\section{Lifting Monomial Ideals with respect to other ideals}\label{Sec-3}

Let $\kk$ be an infinite field, $I_0\subseteq R_0:=\kk[x_1,\ldots,x_{n-1}]$ a monomial ideal, and let $y$ be a new variable and $R:=R_0[y]$.

In this section we discuss one of the main technical tools utilized in the paper -- a general method to lift $I$ to a radical ideal $I'$ in $R$. We have been inspired by other methods that have appeared in the literature in various papers. E.g. Hartshorne \cite{Hartshorne66}, Geramita--Gregory--Roberts \cite{GGR86}, and Migliore--Nagel \cite{MN00} all provide methods to ``lift" $I$ to a radical homogeneous ideal $I'$ in $R$. 
However, these methods have been developed with different mathematical objectives than ours, so it is not surprising that these lifts are not effective to solve our problem.

Instead, we introduce a more flexible method, allowing one to ``lift" all monomial ideals in $R_0$ via general elements of some homogeneous ideal $K\subseteq R$. We call it a {\em lift of $I_0$ relative to $K$} (see \Cref{Def-lift}). The main drawback of our method is that these lifts are not homogeneous ideals. 

\begin{Definition}
 For a monomial ideal $I$ in some polynomial ring over a field, let
$$
\max(I):=\max\{u\in \NN_0\,\mid\, \exists\,i \text{ s.t. } x_i^u \text{ divides some minimal generator of }I\}.
$$  
\end{Definition} So for instance, $I$ is squarefree $\Llra$ $\max(I)\leq 1$.

\begin{Construction}
Let $R_0=\kk[x_1,\ldots,x_{n-1}]$, $y$ a new variable, $R=R_0[y]$. Fix a proper homogeneous ideal $K\neq (0)$ in $R$ and $N\in \ZZ_+$. We define a map $\psi$, depending on $N$ and a minimal homogeneous generating set of $K$, which allows one to ``lift" all  monomial ideals of $R_0$ to ideals in $R$. 
Let $K$ be minimally generated by homogeneous elements $g_1,\ldots,g_u$. As we want to choose general elements in $K$ with some specific properties, we take any $N(n-1) \times u$ matrix $B$ with entries in $\kk$ satisfying property ($\star$) described below.

Let $\mathcal A$ be the power set of $\{x_1,...,x_{n-1}\}$. So, any $A\in \mathcal A$ is a (possibly empty) set with $\emptyset \subseteq A \subseteq \{x_1,...,x_{n-1}\}$. Set $u_A:=\mu(\ovl{K})\leq u$, where $\overline{K}$ is the image of $K$ in $\overline{R} = R/(A)$, and let 
$$
U=\{u_A\,\mid\,A\in \mathcal A\}.
$$
\begin{center}
($\star$)\qquad \qquad For every $u_A\in U$, every $u_A$-minor of $B$ is non-vanishing. Additionally, if $1\in U$, then all entries of $B$ are non-zero and distinct. 
\end{center}

\medskip

We observe that (i) property ($\star$) translates into finitely many non-vanishing conditions, (ii) a generic $N(n-1) \times u$ matrix $X$ satisfies ($\star$), and (iii) $|\kk|=\infty$. Therefore, a specialization of $X$ with general elements of $\kk$ preserves ($\star$), so such a matrix $B$ exists.

Now, letting $[\underline{g}]^T$ be the column vector whose entries are $g_1,\ldots,g_u$, we define elements $\underline{f}=f_{1,1},\,f_{1,2}\,,\,\ldots\,,\,f_{1,N}\,,\,f_{2,1}\,,\,\ldots,\,f_{n-1,1}\,,\,\ldots\,,\,f_{n-1,N}\in R$ as
$$B[\underline{g}]^T = [\underline{f}]^T.$$

We now use the $f_{i,j}$'s to construct the following $(n-1)\times N$ ``lifting matrix"
$$
\Psi=\begin{bmatrix} x_1 + yf_{1,1} & x_1 + yf_{1,2} &  \;\;\ldots\;\; & x_1 + yf_{1,N} \\ 
	x_2 + yf_{2,1}& x_2 + yf_{2,2} &\ldots & x_2 + yf_{2,N}   \\ 
	\vdots &\vdots &\vdots & \vdots \\ 
	x_{n-1} + yf_{n-1,1} & x_{n-1} + yf_{n-1,2} & \ldots & x_{n-1} + yf_{n-1,N}  
\end{bmatrix}.
$$
We let $\wdt{\Psi}$ be the (infinite) matrix, with $n-1$ rows, and columns indexed by $\ZZ_+$, obtained by extending $\Psi$ with zeroes for the $f_{i,j}$, i.e. setting $f_{i,j}=0$ for any $1\leq i \leq n-1$ and $j>N$. We employ this matrix to define a map $\psi$ from all the monomials of $R_0$ to $R$. First, we define $\psi$ on pure powers of monomials:
  $$
  \psi(x_{i}^a) = \prod_{j=1}^{a} (x_i+yf_{i,j}) \qquad \text{ for every }1\leq i \leq n-1.
  $$ We then extend this map multiplicatively to all monomials in $R_0$:
   $$
  \psi(x_1^{a_1}\cdots x_{n-1}^{a_{n-1}}) = \psi(x_1^{a_1})\cdots \psi(x_{n-1}^{a_{n-1}}).
  $$
 Note that $\psi$ cannot be extended to a ring homomorphism. Nevertheless, $\psi$ acts on every monomial of $R_0$. Hence in particular,  $\psi(G(I_0))\subseteq R$ is well-defined for any monomial ideal $I_0\subseteq R_0$.
\end{Construction}

We record a couple of properties of this lifting map.
\begin{Remark}\label{Rmk-min-gen}
In the above setting, for any $\emptyset \subseteq A \subseteq \{x_1,...,x_{n-1}\}$, we denote images in $\overline{R} = R/(A)$ by $\overline{\phantom{R}}$. For any such $A$, it follows from $(\star)$ that:
   \begin{enumerate}
	\item $\overline{K}$ is minimally generated by any $u_A$-subset of $\{\overline{f_{i,j}}\}$. In particular, when $A=\emptyset$, any $u$-subset of $\{f_{i,j}\}$ minimally generates $K$.
	
	\item If ${\overline{K}}\neq {\overline{(0)}}$, then $\overline{f_{i,j}} - \overline{f_{i,h}} \notin \overline{\m}{\overline{K}}$, for any $1\leq i\leq n-1$ and any $1\leq j< h\leq N$.  
\end{enumerate}
\end{Remark}

\begin{Definition}\label{Def-lift}
With the above setting and notation, let $I_0\subseteq R_0$ be a monomial ideal. We write 
$$
I'=(\psi(G(I_0))R.
$$
We call $I'$ a {\em partial lift of $I_0$ relative to $K$}. 
If $\max(I)\leq N$, then $I'$ is a {\em lift of $I_0$  relative to $K$}.
\end{Definition}

\begin{Remark}\label{Rmk-Basic-Facts-Lift}
Let $I',J'$ be partial lifts of proper monomial ideals $I_0,J_0\subsetneq R_0$, respectively, relative to a homogeneous ideal $K\subseteq R$. 
\begin{enumerate}[(i)]
    \item If $I_0\subseteq J_0$, then $I'\subseteq J'\subseteq \m=(x_1,\ldots,x_{n-1},y)$. 
    
    \item $I'$ is a homogeneous ideal $\Llra$ all entries of $\Psi$ are homogeneous $\Llra$ $K=R$.
    
    \item $(I_0,y)=(I',y)$.
\end{enumerate}
\end{Remark}

Recall that a quotient ring $R/J$ is $(S_h)$ (or satisfies {\em Serre's property }$(S_h)$) for some $h\in \ZZ_+$, if ${\rm depth}(R_\p/J_\p)\geq \max\{h, \dim(R_\p/ J_\p)\}$. E.g. $R/J$ satisfies $(S_1)$ if and only $\Ass(R/J)=\Min(J)$. 

\begin{Proposition}\label{Prop-x-regular}
Let $I_0 \subset R_0:=\kk[x_1,...,x_{n-1}] \subset R = R_0[y]$ be a monomial ideal, and let $I'$ be a lift of $I_0$ relative to some homogeneous ideal $K$ of $R$. Then	
	\begin{enumerate}
		\item $\h(I')=\h(I_0)$;
		\item for every $\q\in \Min(I')$ 
        there exist $d\in \ZZ_+$, $1\leq b_1<b_2<\ldots<b_d\leq n-1$ and $1\leq i_j \leq N$ such that
		$$	\q=(x_{b_1} + yf_{b_1,i_1}\,,\; x_{b_2} + yf_{b_2,i_2}\,,\ldots,\;x_{b_d} + yf_{b_d,i_d} );$$
 		\item $y\notin \bigcup_{\q\in \Min(I')}\q$. In particular, if $R/I'$ is $(S_1)$ (e.g. if $R/I_0$ is $(S_1)$, see \Cref{Lift-Algebraic-Properties}(3)), then $y$ is regular on $R/I'$.


	\end{enumerate}
\end{Proposition}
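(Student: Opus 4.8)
The plan is to analyze the minimal primes of $I'$ directly from its generators $\psi(G(I_0))$, and to transfer information about $\Min(I_0)$ to $\Min(I')$ via the map $\psi$.

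First I would establish (2), which is really the structural heart of the statement. Each generator $\psi(M)$ of $I'$, for $M = x_1^{a_1}\cdots x_{n-1}^{a_{n-1}}\in G(I_0)$, factors as a product $\prod_i \prod_{j=1}^{a_i}(x_i + yf_{i,j})$ of \emph{linear-type} elements $x_i + yf_{i,j}$. Since $\max(I_0)\le N$, all the indices $j$ occurring stay in the range $1\le j\le N$, so every factor genuinely has the shape appearing in the statement. Consequently $I'$ is contained in the ideal generated by all the $x_i + yf_{i,j}$, and more precisely $V(I')$ is the union, over choices of a ``transversal'' picking for each generator $M$ one of its linear factors, of the linear varieties cut out by the chosen factors. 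Intersecting such a union of linear ideals and extracting the minimal ones, one sees that every minimal prime $\q$ of $I'$ must be generated by finitely many of the $x_{b}+yf_{b,i}$ — and since these elements in distinct variables $x_{b_1},\dots,x_{b_d}$ form a regular sequence (they are ``triangular'' in the $x$'s with a $y$-twist), the ideal they generate is already prime of height $d$. The only thing to check carefully is that distinct $b$'s appear, i.e. one never needs two factors $x_b+yf_{b,i}$ and $x_b+yf_{b,i'}$ with the same $b$: this follows because for a minimal prime, Remark~\ref{Rmk-min-gen}(2) (the genericity condition $(\star)$, which forces $f_{b,i}-f_{b,i'}\notin\m K$, hence $x_b+yf_{b,i}$ and $x_b+yf_{b,i'}$ generate a height-$2$ ideal modulo which $K$ still drops — so keeping both is never minimal). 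I would also observe that modulo $(x_1,\dots,x_{n-1})$, a prime $\q$ of the above form contains $y f_{b_1,i_1}$, etc., and use $(\star)$ to control which primes actually occur.

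Next, part (1): the containment $I' \subseteq (x_{b}+yf_{b,i} : \text{all } b,i)$ together with the fact that setting $y=0$ recovers $I_0$ — formally, $(I',y)=(I_0,y)$ by Remark~\ref{Rmk-Basic-Facts-Lift}(iii) — pins down the height. From $(I',y)=(I_0,y)$ we get $\h(I',y) = \h(I_0,y)=\h(I_0)+1$, while from the description in (2) every minimal prime of $I'$ has height equal to the number of factors $d$ in a minimal transversal, which matches the height of the corresponding minimal prime of $I_0$ (a minimal prime of $I_0$ is $(x_{b_1},\dots,x_{b_d})$ exactly when the specialization $f_{b_j,i_j}\mapsto 0$ gives a minimal prime of $I_0$, by the previous paragraph's correspondence). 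So $\h(I')=\h(I_0)$; one must just be slightly careful that no ``new'' lower-height components appear, which again is where $(\star)$ is used — a subset of the $x_b+yf_{b,i}$ that does not come from a prime of $I_0$ cannot contain $I'$.

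Finally part (3) is immediate from the explicit form in (2): each generator $x_{b_j}+yf_{b_j,i_j}$ of a minimal prime $\q$ has the coefficient of $x_{b_j}$ equal to $1$, so $y\notin\q$ (if $y\in\q$ then $x_{b_j}\in\q$ for all $j$, forcing $\q=(x_{b_1},\dots,x_{b_d})$, but this prime does not contain the element $x_{b_1}+yf_{b_1,i_1}$ unless $f_{b_1,i_1}\in\q$, and one checks via $(\star)$ this cannot hold for all generators simultaneously). Hence $y$ avoids every minimal prime, so if $R/I'$ satisfies $(S_1)$ — equivalently $\Ass(R/I')=\Min(I')$ — then $y$ is a nonzerodivisor on $R/I'$; the parenthetical ``e.g. if $R/I_0$ is $(S_1)$'' will follow from the forthcoming \Cref{Lift-Algebraic-Properties}(3), which I am allowed to cite.

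I expect the main obstacle to be part (2): correctly arguing that a minimal prime uses \emph{distinct} row-indices $b$ and that the resulting transversals biject with $\Min(I_0)$, all of which rests on carefully unwinding the genericity hypothesis $(\star)$ and Remark~\ref{Rmk-min-gen}. Parts (1) and (3) are then short consequences of the normal form obtained in (2).
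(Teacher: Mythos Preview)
Your proposal has a genuine gap in part~(2), which you correctly identify as the structural heart. You try to rule out two factors $x_b+yf_{b,i}$ and $x_b+yf_{b,i'}$ lying in the same minimal prime $\q$ by invoking the genericity condition $(\star)$ via \Cref{Rmk-min-gen}(2). But this does not work: from both lying in $\q$ you only get $y(f_{b,i}-f_{b,i'})\in\q$, and at this stage you do not yet know $y\notin\q$ (that is part~(3), which you deduce \emph{from}~(2), so the reasoning is circular). Even granting $y\notin\q$, the conclusion $f_{b,i}-f_{b,i'}\in\q$ is not obviously incompatible with $(\star)$: \Cref{Rmk-min-gen}(2) only says this difference is nonzero modulo $\m K$ in certain quotients, not that it avoids an arbitrary prime $\q$. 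Your phrase ``modulo which $K$ still drops --- so keeping both is never minimal'' does not supply the missing step.

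The paper's argument for~(2) is entirely different and does \emph{not} use $(\star)$ at all. It exploits a combinatorial feature of the lifting map $\psi$ that you overlook: by definition $\psi(x_b^a)=\prod_{j=1}^a(x_b+yf_{b,j})$, so whenever $x_b+yf_{b,j}$ divides some $\psi(M)$, every $x_b+yf_{b,k}$ with $k\le j$ also divides $\psi(M)$. Given $\q\in\Min(I')$, for each row index $b$ appearing among the factors in $\q$ one keeps only the factor $x_b+yf_{b,k_b}$ with \emph{smallest} second index~$k_b$. The resulting ideal $H'$ is prime (distinct $b$'s, regular sequence), still contains $I'$ by the downward-closure property just stated, and sits inside $\q$; minimality of $\q$ over $I'$ then forces $\q=H'$.

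This also simplifies~(1) and~(3). The paper proves~(1) directly, without~(2): the inequality $\h(I')\le c$ comes from lifting one minimal prime of $I_0$, and the reverse inequality from $(I',y)=(I_0,y)$ together with Krull's Altitude Theorem --- no bijection of minimal primes is needed, and your sketch of such a bijection is not complete. For~(3), once $\q$ has the form in~(2) with $d$ generators, $\h(\q)=d$; if $y\in\q$ then $\q\supseteq(x_{b_1},\dots,x_{b_d},y)$ has height $d+1$, a contradiction. Your detour through ``$\q=(x_{b_1},\dots,x_{b_d})$'' and a further appeal to $(\star)$ is both incorrect (you have assumed $y\in\q$, so $\q$ cannot equal that monomial prime) and unnecessary.
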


\begin{proof}
Let $c:=\h(I_0)$.\\
(1) Let $\p_0\in \Min(I_0)$ with $\h(\p_0)=\h(I_0)$. After possibly reordering the variables we may assume $\p_0=(x_1,\ldots,x_c)$. By \Cref{Rmk-Basic-Facts-Lift}$(i)$, $I'\subseteq \p_0'=(x_1+yf_{1,1},x_2+yf_{2,1},\ldots,x_c+yf_{c,1})$, and $\p_0'$ is obviously a prime ideal of height $d$ (e.g. because $R/\p_0'\cong \kk[x_{c+1},\ldots,x_{n-1},y]$). This proves $\h(I')\leq c$. 
	Now, let $\q$ be a minimal prime of $I'$ with $\h(\q)=\h(I')\leq c$. Then $(\q,y)\supseteq (I',y)=(I_0,y)R$. Since $I_0\subseteq R_0$, then 
    $$
    c+1=\h((I_0,y))\leq \h((\q,y))\leq \h(\q)+1=\h(I')+1\leq c+1,
    $$ 
    where the rightmost inequality holds by Krull's Altitude Theorem and because $R$ is a catenary ring. This implies that $\h(I')=c$.	
	
	(2) Since $\psi(G(I_0))$ is a minimal generating set of $I'$, 
    and each element in  $\psi(G(I_0))$ is a product of polynomials of the form $x_b - yf_{b,j}$, then every $\q\in \Min(I')$ is also a minimal prime of some ideal $H\supseteq I'$ generated by some of these polynomials $x_b - yf_{b,j}$.
    
    We will trim $H$ to a prime ideal containing $I'$. Let $B$ be the set of indices $b$ where some $x_b-yf_{b,j} \in H$. For any given $b \in B$, we set 
    $$
    k_b =\min\{ j \,\mid\, x_b - yf_{b,j} \in H\}.
    $$

    Now we let $H' =(x_{b_1} + yf_{b_1,k_1},x_{b_2} + yf_{b_2,k_2},\ldots,x_{k_d} + yf_{b_d,k_d} )$, where $1\leq b_1<b_2<\ldots<b_d \leq n-1$. As above, $H'$ is prime. Also, $I' \subseteq H'$ because, by construction of the map $\psi$, any element in $\psi(G(I_0))$ divisible by $x_b - yf_{b,j}$ is also divisible by $x_b - yf_{b,k_b}$ since $k_b \leq j$. Therefore, $I'\subseteq H' \subseteq H \subseteq \q$. Since $\h(\q)=\h(H)=\h(I')$, then $\h(\q)=\h(H')$ too. Since $H'\subseteq \q$ are two prime ideals of the same height $H'=\q$. 

(3) Let $\q \in \Min(I')$, then, by part (2), $\q=(x_{b_1} + yf_{b_1,i_1},x_{b_2} + yf_{b_2,i_2},\ldots,x_{b_d} + yf_{b_d,i_d} )$ and, as explained in part (1), $\h(\q)=d$. If $y\in \q$, then $\q=(x_{b_1},x_{b_2},\ldots,x_{b_d},y)$ would have height $d+1$, yielding a contradiction.
\end{proof}

We recall the notion of ``expected ranks" from a paper of Buchsbaum and Eisenbud \cite{BE73}.
\begin{Definition}
   Let $(\res_\bullet, d_\bullet)$ be a finite complex of free $R$-modules, and let $F_0,\ldots,F_p$ be the free modules in it. For any $0\leq i \leq p$, one defines 
   $$
   r_i(\res_\bullet):=\sum_{j=0}^{p-i}(-1)^{j}\,\textrm{rank}\,F_{i+j}.$$\end{Definition}
It is well--known that the ideals of $r_i$-minors, $I_{r_i}(d_i)$, of the matrices of $\res_\bullet$ detect whether $\res_\bullet$ is a free resolution. By the following result, these minors detect Serre's properties too. 
The result is folklore, and can be found in \cite[Prop.~6.2.3]{Kummini}, see also \cite[6.11.3]{EGAIVPart2}.

However, for the sake of completeness, we sketch a short proof below. The proof follows along the same lines as \cite[Prop.~2.4]{HMMS}. Recall that for a finite $R$-module $M$ $\h(M):=\h({\rm ann}(M))$.

\begin{Proposition}\label{ShProperty}
Let $M$ be a finite $R$-module. Assume $M$ is (grade-)unmixed, i.e. $\h(\p)=\h(M)$ for all $\p \in V({\rm ann}(M))$. 	Let $(\res_\bullet, d_\bullet)$ be a free resolution of $M$. 

For any $h\in \ZZ_+$, $M$ satisfies Serre's condition $(S_h)$ if and only if
	\[\h(I_{r_i}(d_i)) \ge \min\{\dim(R), i+h\} \qquad \text{for all $i = \h(M)+1,\ldots, p$.}\]
\end{Proposition}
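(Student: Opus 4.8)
\textbf{Proof proposal for Proposition~\ref{ShProperty}.}

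The plan is to reduce to a local statement and then apply the Buchsbaum--Eisenbud acyclicity criterion to localizations of $\res_\bullet$. Recall that $M$ satisfies $(S_h)$ if and only if $\depth(M_\p) \ge \min\{h, \dim M_\p\}$ for every prime $\p \supseteq \operatorname{ann}(M)$. Since $M$ is grade-unmixed, $\dim M_\p = \dim R_\p - \h(M)$ for every such $\p$ (using that $R$ is Cohen--Macaulay, so the localizations are catenary and equidimensional), so the $(S_h)$ condition becomes a condition on $\depth(M_\p)$ indexed by the value of $\h(\p) =: t$. The key translation is the Auslander--Buchsbaum formula applied to $M_\p$: since $(\res_\bullet)_\p$ is a free resolution of $M_\p$ over the regular (hence Cohen--Macaulay) local ring $R_\p$, one has $\depth(M_\p) = \h(\p) - \pd_{R_\p}(M_\p)$. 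Thus controlling $\depth(M_\p)$ is the same as controlling the length of the minimal free resolution of $M_\p$, which is in turn detected by which ideals $I_{r_i}(d_i)$ survive localization at $\p$.

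First I would make the Buchsbaum--Eisenbud criterion explicit: the localized complex $(\res_\bullet)_\p$ is a free resolution of $M_\p$ of length $\le \ell$ (i.e.\ $\pd_{R_\p} M_\p \le \ell$) if and only if for every $i > \ell$ the ideal $I_{r_i}(d_i)_\p$ equals $R_\p$, equivalently $\h(\p) $ ``escapes'' $I_{r_i}(d_i)$ in the sense that $\p \not\supseteq I_{r_i}(d_i)$; combined with the standard rank conditions $\operatorname{rank} d_i = r_i$ that hold globally because $\res_\bullet$ resolves $M$ over the domain $R$. Next I would run the equivalence in both directions. For the ``if'' direction: suppose $\h(I_{r_i}(d_i)) \ge \min\{\dim R, i+h\}$ for all $i \ge \h(M)+1$. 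Fix $\p$ with $t := \h(\p)$; I want $\depth(M_\p) \ge \min\{h, t - \h(M)\}$, i.e.\ $\pd_{R_\p} M_\p \le t - \min\{h, t-\h(M)\} = \max\{t-h, \h(M)\}$. If $i > \max\{t-h, \h(M)\}$, then $i \ge \h(M)+1$ and $i + h > t$, so the hypothesis gives $\h(I_{r_i}(d_i)) \ge \min\{\dim R, i+h\} > t$ (when $i+h \le \dim R$; when $i + h > \dim R$ we get $\h(I_{r_i}(d_i)) = \dim R \ge t$, and one checks the equality case $t = \dim R$ separately, where $\depth M_\p = 0$ suffices and $M$ unmixed forces this only if $\h(M) = \dim R$ — handle this edge case directly). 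Hence $\p \not\supseteq I_{r_i}(d_i)$, so $I_{r_i}(d_i)_\p = R_\p$ and $(d_i)_\p$ has a unit maximal minor; by Buchsbaum--Eisenbud (applied after splitting off the corresponding free summand, or directly via the acyclicity lemma) $\pd_{R_\p} M_\p \le \max\{t-h,\h(M)\}$, which is exactly $(S_h)$ at $\p$.

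For the ``only if'' direction I would argue contrapositively: if $\h(I_{r_i}(d_i)) < \min\{\dim R, i+h\}$ for some $i \ge \h(M)+1$, pick a prime $\p \supseteq I_{r_i}(d_i)$ with $t := \h(\p) = \h(I_{r_i}(d_i)) < \min\{\dim R, i+h\}$. Then $I_{r_i}(d_i)_\p \ne R_\p$, so $(d_i)_\p$ cannot be split, forcing $\pd_{R_\p} M_\p \ge i$ (this is where one needs $\operatorname{rank}(d_i) = r_i$ globally, so that failure of the minor condition genuinely obstructs acyclicity of the truncation — some care is needed to conclude $\pd \ge i$ rather than merely ``$\res_\bullet$ not minimal-length,'' but since $\res_\bullet$ is an honest resolution globally, its localization is a resolution and its length-$i$ truncation fails to be acyclic precisely when the $r_i$-minor ideal is not the unit ideal). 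Then $\depth(M_\p) = t - \pd_{R_\p} M_\p \le t - i < \min\{h, t-\h(M)\}$ after unwinding $t < i+h$ and $i \ge \h(M)+1$, so $(S_h)$ fails.

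The main obstacle I expect is the bookkeeping around the truncation/corner at $i + h \ge \dim R$ and at primes $\p$ with $\h(\p) = \dim R$, where the ``$\min$'' with $\dim R$ and the unmixedness hypothesis interact delicately; one must make sure the stated bound is neither too weak (missing some $(S_h)$ obstruction) nor vacuously strong there. The other point requiring genuine care, rather than routine checking, is the precise form of Buchsbaum--Eisenbud needed: I would cite their criterion that $\res_\bullet$ is acyclic at the $i$-th spot over a local ring iff $\operatorname{grade} I_{r_i}(d_i) \ge i$, and then over the regular local ring $R_\p$ use $\operatorname{grade} = \h$, so the clean statement is that $\pd_{R_\p} M_\p \le \ell$ iff $\h(I_{r_i}(d_i)) \ge i$ fails to be violated, i.e.\ $\h(I_{r_i}(d_i)_\p) \ge \min\{\h(\p), i\}$ with equality-to-$\h(\p)$ meaning the truncation is still exact; translating this uniformly over all $\p$ into the single global inequality in the statement is the crux, and it is exactly the computation sketched above. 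This mirrors \cite[Prop.~2.4]{HMMS} closely, so I would follow that template.
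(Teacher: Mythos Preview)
Your proposal is correct and follows essentially the same approach as the paper: both use Auslander--Buchsbaum to translate $\depth(M_\p)$ into $\pd_{R_\p}(M_\p)$, invoke unmixedness to write $\dim(M_\p)=\h(\p)-\h(M)$, and then characterize $\pd_{R_\p}(M_\p)\le \ell$ via the condition $\p\not\supseteq I_{r_{\ell+1}}(d_{\ell+1})$. The paper packages the argument as a single chain of equivalences on the negation ``$M$ is not $(S_h)$'', while you split it into separate ``if'' and ``only if'' directions, but the content is the same; the paper also uses the containment $\sqrt{I_{r_{t'+1}}(d_{t'+1})}\subseteq \sqrt{I_{r_{t+1}}(d_{t+1})}$ for $t'\le t$ from \cite{BE73} to tidy up the backward direction, which absorbs exactly the bookkeeping you flag around the $\min\{\dim R,i+h\}$ corner.
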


\begin{proof} The statement follows from these equivalences:
	$$\begin{array}{ll}
		M \text{ is not } (S_h) 	&  \Leftrightarrow  \exists\, \p \in \supp{M} \text{ with } {\rm depth}(M_\p) < \min\{h, \dim(M_\p)\}\\
		& \Leftrightarrow \exists\, \p \in \supp{M}\text{ with } \pd_{R_\p}(M_\p) > \max\{\h(\p) - h, \h(\p) - \dim(M_\p)\}\\
		& \Leftrightarrow \exists\, \p \in \supp{M}\text{ with } \pd_{R_\p}(M_\p) > \max\{\h(\p) - h, \h(M)\}\\
		&  \Leftrightarrow \exists\, \p \in \supp{M}\text{ with }\p \supseteq I_{r_{t+1}}(d_{t+1}), \text{ where } t := \max\{\h(\p) - h, \h(M)\}\\
		& \Leftrightarrow \h(I_{r_{t+1}}(d_{t+1})) \leq \min\{\dim(R),t + h\} \text{ for some } t \geq \h(M). 
	\end{array}$$
The second equivalence follows from the Auslander-Buchsbaum Theorem.  The third equivalence follows from  $\dim(M_\p)=\h(\p)-\h(M)$ and the unmixedness of $M$.  The fourth equivalence follows because the map $(d_{t+1})_\p$ splits if and only if $\p \not\supset I_{r_{t+1}}(d_{t+1})$ if and only if  $\pd_{R_\p}(M_\p) \le t$. 

Now for last equivalence. For the forward direction we go through cases depending on the value of $t$. If $t = \h(M)$, then $\h(\p) \leq \h(M) + h$, it follows that $\h(I_{r_{t+1}}(d_{t+1})) \leq \h(\p) = t +h$. For the other case, $t = \h(\p) -h \geq \h(M)$. Then $\h (I_{r_{t+1}}(d_{t+1})) \leq \h(\p)\leq t + h$, and indeed $t \geq \h(M)$. For the backwards direction, take any prime $\p$ with $\p\supseteq I_{r_{t+1}}(d_{t+1})$ and $\h(\p)=\h( I_{r_{t+1}}(d_{t+1}))$. By assumption $\h(\p) -h\leq t$ and $t \geq \h(M)$, thus $t \geq \max\{ \h(\p) -h, \h(M) \} =: t'$. The result follows by the inclusion $\sqrt{I_{r_{t'+1}}(d_{t'+1})} \subseteq \sqrt{I_{r_{t+1}}(d_{t+1})}$ as $t' \leq t$, see \cite[p.~261]{BE73}.
\end{proof}

Recall that an ideal $J$ in $R$ is called {\em unmixed} if $\h(\p)=\h(J)$ for every $\p\in \Ass(R/J)$. In particular, if $J$ is unmixed, then $R/J$ is $(S_1)$.

A key result for us will be showing that our construction produces a radical ideal $I'$ whenever $I_0$ is unmixed and $\h\, K > \h \, I_0$. We also show that a free resolution of $I'$, which will be a minimal resolution locally at the maximal homogeneous ideal, is obtained by properly lifting a minimal free resolution of $I_0$. To the best of our knowledge, this procedure first appeared in \cite{MN00}. Since our construction is similar but incomparable with the one of {\it op. cit.}, below we give a short adaptation of their proof that fits our needs. For more details on this procedure, we refer to the proof of \cite[Prop.~2.6]{MN00} and the discussion surrounding it.

\begin{Theorem}\label{Lift-Algebraic-Properties}
Let $I_0 \subset R_0:=\kk[x_1,...,x_{n-1}] \subset R = R_0[y]$ be a monomial ideal, and let $I'$ be a lift of $I_0$ relative to some homogeneous ideal $K$ of $R$ and some $N\geq \max(I_0)$. Then
\begin{enumerate}
    \item $\beta_i^{R_\m}(R_\m/I'_\m) = \beta_i^{R_0}(R_0/I_0)$ for all $i$, where $\m:=(x_1,\ldots,x_{n-1},y)\subseteq R$.\\
    In particular, $I'_\m$ is Cohen--Macaulay, Gorenstein, or a complete intersection $\Llra$ $I_0$ is respectively.

    \item $y$ is regular on $R_{\m}/I'_{\m}$.
    
    \item If $R_0/I_0$ is $(S_h)$ for some $h\geq 1$ $\Lra$ $R/I'$ is $(S_h)$.
    
   \item If $I_0$ is unmixed $\Lra$ $I'$ is unmixed.
    
    \item\label{lift-is-radical} If $I_0$ is unmixed and $\h(I_0)<\h(K)$ $\Lra$ $I'$ is a radical ideal. 

\end{enumerate}
\end{Theorem}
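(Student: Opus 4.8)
\textbf{Proof plan for \Cref{Lift-Algebraic-Properties}.}

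The plan is to establish (1) first, by explicitly lifting a minimal free resolution of $R_0/I_0$ to a free resolution of $R/I'$ whose entries are obtained by applying a suitable ``lift'' of the monomial entries via $\psi$ (recording the auxiliary $f_{i,j}$'s), exactly as in \cite[Prop.~2.6]{MN00}. The key point is that the lifted complex $\res_\bullet'$ has the same ranks as the minimal free resolution $\res_\bullet$ of $R_0/I_0$, and modulo $y$ it reduces to $\res_\bullet$ (using \Cref{Rmk-Basic-Facts-Lift}(iii)); since $y$ is a nonzerodivisor on $R$, a standard acyclicity lemma (e.g. the Buchsbaum--Eisenbud criterion, checking that $y$ together with a regular sequence on $R_0/I_0$ gives the right depths, or a mapping-cone/Nakayama argument after localizing at $\m$) shows $\res_\bullet'$ is acyclic and resolves $R/I'$. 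Localizing at $\m$ and noting that the lifted differentials have entries in $\m$, minimality is preserved, giving $\beta_i^{R_\m}(R_\m/I'_\m)=\beta_i^{R_0}(R_0/I_0)$; the Cohen--Macaulay / Gorenstein / complete intersection transfer then follows from $\h(I')=\h(I_0)$ (\Cref{Prop-x-regular}(1)) together with the equality of Betti numbers and, for the Gorenstein/CI case, the explicit shape of the top of the lifted resolution. Part (2) is then immediate: $y$ is regular on $R/I$ by construction (it does not even appear in $R_0$), hence on $R_\m/I'_\m$ by the mod-$y$ identification $\res_\bullet'\otimes R/yR\simeq \res_\bullet$, or directly from \Cref{Prop-x-regular}(3) once we know $R/I'$ is $(S_1)$, which is part of (3).

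For (3), I would use \Cref{ShProperty} as the bridge between Serre's condition and heights of minors. By (1) the lifted complex $\res_\bullet'$ is, after localizing at $\m$, a minimal free resolution with the same ranks $r_i$ as that of $R_0/I_0$; globally it is still a (possibly non-minimal) free resolution of $R/I'$. One shows $\h(I_{r_i}(d_i'))=\h(I_{r_i}(d_i))+1$ for each relevant $i$: indeed the entries of $d_i'$ are the $\psi$-lifts of the entries of $d_i$, and specializing $y\mapsto 0$ recovers $d_i$, so $I_{r_i}(d_i')+ (y) \supseteq (I_{r_i}(d_i), y)$, giving $\h(I_{r_i}(d_i'))\ge \h((I_{r_i}(d_i),y))-1 = \h(I_{r_i}(d_i))$; for the reverse, one checks (as in the height-one-drop argument of \Cref{Prop-x-regular}(1), using that $\dim R = \dim R_0+1$ and catenarity) that the minor ideals cannot be supported in $V(y)$, so in fact heights go up by exactly one. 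Since $\h(I') = \h(I_0)+1$ as ideals in rings of dimensions differing by one, the inequalities $\h(I_{r_i}(d_i))\ge \min\{\dim R_0, i+h\}$ for $R_0/I_0$ translate termwise into $\h(I_{r_i}(d_i'))\ge \min\{\dim R, i+h\}$ for $R/I'$, and \Cref{ShProperty} (applicable once we know $I'$ is unmixed, see below) yields $(S_h)$.

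For (4), unmixedness is the case $h=1$ of (3) combined with \Cref{Prop-x-regular}(1)--(2): every minimal prime $\q$ of $I'$ has height $\h(I_0)$ by the description in \Cref{Prop-x-regular}(2), and $(S_1)$ forces $\Ass(R/I')=\Min(I')$, so all associated primes have height $\h(I_0)=\h(I')$. Finally (5) is the main obstacle and the place where the hypothesis $\h(K)>\h(I_0)$ is used in an essential way. Here the idea is: $I'$ is unmixed by (4), so to prove it is radical it suffices to show $R_\q/I'_\q$ is a field, equivalently reduced, for every $\q\in\Min(I')=\Ass(R/I')$; by (1) and flat base change $R_\q/I'_\q$ is a complete intersection (a zero-dimensional regular-like quotient cut out by $\h(I_0)$ elements that form part of a system of parameters after the shape in \Cref{Prop-x-regular}(2)), so it is enough to show the lifted generators have linearly independent leading forms at $\q$ — i.e. that the Jacobian of $\psi(G(I_0))$ has the expected rank at each minimal prime. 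This is precisely where $(\star)$ and \Cref{Rmk-min-gen} enter: the condition $\h(K)>\h(I_0)$ guarantees that, localizing at any $\q=(x_{b_1}+yf_{b_1,i_1},\dots)$, the ``error terms'' $y f_{b,j}$ and the differences $f_{b,j}-f_{b,j'}$ remain generic enough (non-vanishing minors of $B$, distinctness of entries when $1\in U$) that no coincidence among the linear factors $x_b+yf_{b,j}$ occurs at $\q$, so each $\psi(G(I_0))$ is squarefree as an element of $R_\q$. One then concludes $I'_\q=\q_\q$ for all $\q\in\Min(I')$, hence $I'=\bigcap_{\q\in\Min(I')}\q$ is radical. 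I expect the delicate bookkeeping in this last step — translating $(\star)$ into the statement that distinct factors $x_b+yf_{b,j}$ generate distinct primes after localizing, uniformly over all $\q$ and all $A\in\mathcal A$ — to be the technical heart of the argument.
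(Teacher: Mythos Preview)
Your overall architecture matches the paper's: lift the multigraded minimal free resolution \`a la \cite{MN00}, verify acyclicity via Buchsbaum--Eisenbud, deduce (1) and (2), then use \Cref{ShProperty} for (3), deduce (4), and for (5) show $I'_\q=\q_\q$ at each minimal prime. However, two of your steps contain genuine errors.

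\medskip
\textbf{Part (3) and the height bookkeeping.} You assert $\h(I')=\h(I_0)+1$ and that the heights of the minor ideals ``go up by exactly one''. This is false: by \Cref{Prop-x-regular}(1), $\h(I')=\h(I_0)$ (height is insensitive to the flat extension $R_0\hookrightarrow R$), and likewise $\h_R(I_{r_i}(d_i))=\h_{R_0}(I_{r_i}(d_i)\cap R_0)$. What the paper actually proves and uses is only the inequality $\h(I_{r_i}(d_i'))\geq \h(I_{r_i}(d_i))$, obtained from $(I_{r_i}(d_i'),y)=(I_{r_i}(d_i),y)$ together with Krull's altitude theorem; this is already enough to feed into \Cref{ShProperty}. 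Your attempt to prove an equality ``heights go up by exactly one'' via a ``not supported in $V(y)$'' argument is both unnecessary and not obviously correct.

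\medskip
\textbf{Part (4): the missing step.} You write that ``every minimal prime $\q$ of $I'$ has height $\h(I_0)$ by the description in \Cref{Prop-x-regular}(2)''. But \Cref{Prop-x-regular}(2) only gives the \emph{shape} $\q=(x_{b_1}+yf_{b_1,i_1},\ldots,x_{b_d}+yf_{b_d,i_d})$; it does not bound $d$. Knowing $R/I'$ is $(S_1)$ gives $\Ass(R/I')=\Min(I')$, but you still must show all minimal primes have height $c:=\h(I_0)$. The paper does this by setting $\q_0=(x_{b_1},\ldots,x_{b_d})$ and proving $\q_0\in\Min(I_0)$: one checks $\dim (R/I')_{(\q,y)}=1$ (using that $y\notin\q$ and $\q\in\Min(I')$), hence $(R_0/I_0)_{\q_0}\cong (R/(I',y))_{(\q,y)}$ is Artinian, so $\q_0\in\Min(I_0)$ and $d=c$ by unmixedness of $I_0$. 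This argument is not bypassed by anything in your outline.

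\medskip
\textbf{Part (5).} Your target $I'_\q=\q_\q$ is correct, and the intuition that $(\star)$ and $\h(K)>\h(I_0)$ prevent ``coincidences among the factors $x_b+yf_{b,j}$'' is exactly right. The Jacobian/``squarefree element'' framing is more elaborate than needed. The paper's argument is a two-line contradiction: if $\q$ contained two entries $x_b+yf_{b,i}$ and $x_b+yf_{b,k}$ from the same row, then (since $y\notin\q$) one gets $f_{b,i}-f_{b,k}\in\q$; reducing modulo $(x_{b_1},\ldots,x_{b_c})$ places $\overline{f_{b,i}}-\overline{f_{b,k}}\in \overline{y}\,\overline{K}\subseteq\overline{\m}\,\overline{K}$, contradicting \Cref{Rmk-min-gen}(2), which applies precisely because $\h(K)>c$ forces $\overline{K}\neq 0$. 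Once $\q$ contains exactly one entry per relevant row, $I'_\q=\q_\q$ is immediate from the form of $\psi(G(I_0))$.
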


\begin{proof}
For a matrix $\varphi$, denote by $I_{\ell}(\varphi)$ to be the ideal generated by the $\ell \times \ell$ minors of $\varphi$.\\

\noindent (1) We start by making the following claim. Fix a minimal multi-graded free resolution of $R_0/I_0$ over $R$, $$\res_{\bullet}: 0 \xrightarrow{} F_{p} \xrightarrow{\varphi_p} F_{p-1} \xrightarrow{\varphi_{p-1}} \cdots \xrightarrow{\varphi_2} F_1 \xrightarrow{\varphi_1} F_0 \xrightarrow{} R/I \to 0.$$ Then $\res_{\bullet}$ can be lifted to a free resolution $$\res_{\bullet}': 0 \xrightarrow{} F_{p} \xrightarrow{\varphi_p'} F_{p-1} \xrightarrow{\varphi_{p-1}'} \cdots \xrightarrow{\varphi_2'} F_1 \xrightarrow{\varphi_1'} F_0 \xrightarrow{} R/I' \to 0,$$ where every entry of every $\varphi_i'$ is in $\m$, the image in $R/(x)\cong R_0$ of each map $\varphi_i'$ is precisely $\varphi_i$, and $\h(I_\ell(\varphi_i'))\geq \h(I_\ell(\varphi_i))$ for all $i$ and $\ell$.

To establish the claim, we observe that since $\res_\bullet$ is multi-graded, then every entry of every map $\varphi_i$ could be ``lifted" via the map $\psi$. However, unfortunately, doing so may not result anymore in a complex. Regardless, there is still a way to apply variations of the lifting map $\psi$ to obtain a {\em complex}. One first lift the entries of $\varphi_1$ using $\psi$. Next, assuming the entries of $\varphi_{i-1}$ have been lifted using some map, one lifts $\varphi_i$ by mapping its entries $\prod_{j}x_j^{a_j}$ to $\prod_{j,k} x_j+yf_{j,k}$ in a similar way to $\psi$, except that the $k$'s may be larger than the ones of $\psi$. 
This is explained, for instance, in \cite[Proof of Prop.~2.6]{MN00}. As all these variations of the lifting map still send $x_j^a$ to a product of $a$-many polynomials of the form $x_j + yf_{j,k}$'s, then each entry of the matrix $\varphi_i'$ is still equal to the corresponding entry of $\varphi_i$ modulo $y$.

We now prove that for any $\ell$ and $i$, one has $\h(I_\ell(\varphi_i'))\geq \h(I_\ell(\varphi_i))$. Fix such $\ell$ and $i$. Let $\q$ be a prime of $I_\ell(\varphi_i')$ with $\h(\q)=\h(I_\ell(\varphi_i'))$. 
Since each entry of $\varphi_i'$ equals the corresponding entry of $\varphi_i$ modulo $y$, then 
$(I_\ell(\varphi_i),y) = (I_\ell(\varphi_i'),y)$. Hence
	$$\h(I_\ell(\varphi_i))+1=\h((I_\ell(\varphi_i),y)) = \h((I_\ell(\varphi_i'),y)) \leq \h(I_\ell(\varphi_i'))+1=\h(\q)+1.
	$$
	The leftmost equality holds because $I_\ell(\varphi_i)R$ is extended from $R_0$, the next equality holds because $(I_\ell(\varphi_i'),y)) = (I_\ell(\varphi_i),y)$. The inequality holds by Krull's Altitude Theorem and because $R$ is catenary. 
	Thus, $\h(I_\ell(\varphi_i'))=\h(\q)\geq \h(I_\ell(\varphi_i))$. 

 Next, by the acyclicity criterion of Buchsbaum and Eisenbud \cite{BE73}, our complex $\res_{\bullet}$ is acyclic if and only if $\grade\, I(\varphi_i') \geq i$ for all $i$, where $I(\varphi_i') := I_{r_i'}(\varphi_i')$ and $r_i':=r_i(\res_\bullet')$. Now clearly $r_i(\res_{\bullet}) = r_i(\res_{\bullet}')$, and by the above, $\grade \, I(\varphi_i')=\h\, I(\varphi_i')\geq \h\, I(\varphi_i)=\grade \, I(\varphi_i)  \geq i$ for $1 \leq i \leq p$, where the equalities hold because $R$ is Cohen--Macaulay. 
Therefore, $\res_{\bullet}'$ is a resolution of $R/I'$.
	
 By construction of $\varphi_i'$, all entries of $\varphi_i'$ are in $(x_1,...,x_{n-1},y)$, so $(\res_{\bullet}')_\m$ is a minimal free resolution of $R_\m/I'_\m$. The statement about Betti numbers and the in particulars follow.\\ 

\noindent (2) By \cite[Cor.~1.6.19]{Bruns-Herzog} $y$ is regular on $R_{\m}/I'_{\m}$ if and only if $H_i(y;R_{\m}/I'_{\m}) =0$ for $i > 0$, where $H_i(y;R_{\m}/I'_{\m})$ denote Koszul homology of $R_{\m}/I'_{\m}$ with respect to the sequence $y$. Since $y$ is regular on $R_\m$ there is an isomorphism $H_i(y;R_{\m}/I'_{\m}) \cong \Tor_i^{R_\m}(R_\m/(y), R_{\m}/I'_{\m})$. By $(1)$ $(\res'_{\bullet})_\m$ is a minimal free resolution of $R_{\m}/I'_{\m}$. Now since  $R_\m/(y)\otimes(\res'_\bullet)_\m = (\res_\bullet)_\m$ and $(\res_\bullet)_\m$ is acyclic, we have $\Tor_i^{R_\m}(R_\m/(y); R_{\m}/I'_{\m}) =0$ for $i > 0$.\\

\noindent (3) This statement follows by (1) and \Cref{ShProperty}, because $\h I(\varphi_i')\geq \h I(\varphi_i)$. \bigskip

For the next two parts, write $c:=\h(I_0)=\h(I')$. \\
\\
(4) Since $R_0/I_0$ is $(S_1)$ then, by (3), $R/I'$ is $(S_1)$ too, i.e. $\Ass(R/I')=\Min(I')$. Let $\q\in \Min(I')$. By \Cref{Prop-x-regular}(2), $(\q,y)=(x_{b_1},\ldots,x_{b_d},y)$ for some $d\geq c$. Let $\q_0=(x_{b_1},\ldots,x_{b_d})$. Since $I_0\subseteq (I_0,y)=(I',y)\subseteq (\q,y)=(\q_0,y)$ and $I_0,\q_0$ are $R_0$-ideals, then $I_0\subseteq \q_0$. To prove this statement, it suffices to show that $\q_0\in \Min(I_0)$. Since then 
$$
\h(\q)=\h((\q,y))-1=\h(\q_0,y)-1=\h(\q_0)=c,
$$ where the leftmost equality follows from \Cref{Prop-x-regular}(3), and the rightmost equality follows from the assumption that $I_0$ is unmixed. 

To prove $\q_0\in \Min(I_0)$, we first observe that $(\q,y)=(x_{b_1},\ldots,x_{b_d},y)\in {\rm Spec}(R)$. Also, by \Cref{Prop-x-regular}(3), $\h(\q,y)=\h(\q)+1$, thus 
$$
\begin{array}{lll}
\dim((R/I')_{(\q,y)}) & =\h((\q,y)_{(\q,y)}) - \h(I'_{(\q,y)}) & = \h((\q,y)) - \h(I'_{(\q,y)})\\
& = 1+\h(\q) - \h(I'_{(\q,y)}) & \leq 1+(\h(\q) - \h(I'_\q))\\
& =1+\dim((R/I')_\q)& =1+0\\
& =1,& 
\end{array}
$$
where the second to last equality holds because $\q\in \Min(I')$. 
Since $y$ is in $(\q,y)$ and, by \Cref{Prop-x-regular}(3), $y$ is a   regular element of the local ring $(R/I')_{(\q,y)}$, then 
$$
\dim((R/I')_{(\q,y)})=1.
$$
Therefore, $\dim((R/(I',y))_{(\q,y)})=0$, and since 
$$
(R/(I',y))_{(\q,y)}=(R/(I_0,y))_{(\q_0,y)}\cong (R_0/I_0)_{\q_0},
$$
so $(R_0/I_0)_{\q_0}$ is Artinian, i.e. $\q_0\in \Min(I_0)$, concluding the proof of this part. \\
\\
(5) We prove that $R/I'$ is $(S_1)$ and $I'R_{\q} = \q R_{\q}$ for any $\q \in \Ass(R/I')$. First, since $I_0$ is unmixed, then by (4) and \Cref{Prop-x-regular}(1), $I'$ is unmixed of height $c$, and in particular $R/I'$ is $(S_1)$. Now, fix any $\q\in \Ass(R/I') = \Min(I')$. 
By \Cref{Prop-x-regular}(2), $\q=(x_{b_1} + yf_{b_1,i_1},x_{b_2} + yf_{b_2,i_2},\ldots,x_{b_c} + yf_{b_c,i_c} )$ for some $1\leq b_1<b_2<\ldots<b_c\leq n-1$ and $1\leq i_j \leq N$, so $\q$ contains at least one entry from $c$ distinct rows of the lifting matrix $\Psi$.

To show that $I'R_{\q} = \q R_{\q}$ we need to show that $\q$ contains exactly one entry from each of these rows. Suppose not, then $x_{b_j}+yf_{b_j,i_j}$ and $x_{b_j}+yf_{b_j,k_j} \in \q$ for some $1\leq i_j<k_j\leq N$, so $y(f_{b_j,i_j}-f_{b_j,k_j})\in \q$. By \Cref{Prop-x-regular}, $y\notin \q$, so $f_{b_j,i_j}-f_{b_j,k_j} \in \q$. Let $\overline{R} = R/(x_{b_1},...,x_{b_c})$, and let $\overline{\phantom{R}}$ denote images in $\overline{R}$. Then in $\overline{R}$ we have the following containment,
$$\overline{f_{b_j,i_j}} - \overline{f_{b_j,k_j}} \in\overline{\q} = (\overline{yf_{b_1,i_1}}, \overline{yf_{b_2,i_2}}, ... , \overline{yf_{b_c,i_c}}) \subseteq \overline{y}\overline{K}.$$

By \Cref{Rmk-min-gen}(2), this implies $\overline{K} = \overline{0}$, i.e. $K \subseteq (x_{b_1},\ldots,x_{b_c})$, contradicting that $\h \, K > c$. \end{proof}

\section{Slightly Mixed Symbolic Powers}\label{Sec-4}

In this section we prove our main result. First, we introduce the notion of slightly mixed symbolic powers. 
\begin{Definition}\label{slightly-mixed}A {\em slightly mixed symbolic power} of a monomial ideal $\wdt{J}$ is any ideal of the form 
$$J = \symp{\wdt{J}} : N$$
for some $\ell\in \ZZ_+$ and some squarefree monomial $N\in R$. 
\end{Definition}
Clearly, every symbolic power $\symp{\wdt{J}}$ is a slightly mixed symbolic power -- one just takes $N=1$.

Let us recall that Minh--Trung \cite{MT} and Varbaro \cite{Var} proved, independently, that symbolic powers of \Cmatroidal ideals are Cohen--Macaulay. We now extends this result by proving that all  {\em slightly mixed symbolic powers} of \Cmatroidal ideals are Cohen--Macaulay. 

\begin{Theorem}\label{Mix-Sym-CM} 
Let $J \subset R$ be a \Cmatroid ideal, then every slightly mixed symbolic power of $J$ is Cohen--Macaulay.
\end{Theorem}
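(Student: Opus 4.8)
The natural strategy is induction on the number of variables $n$. The base case $n=1$ is trivial. For the inductive step, I would pick a variable, say $y = x_n$, and try to reduce $J = I_{\M}^{(\ell)}:N$ to a slightly mixed symbolic power in fewer variables, using the fact that matroids behave well under deletion and contraction. Recall that for a matroid $\M$, both $\Del_\M(v)$ (deletion) and $\M/v$ (contraction) are again matroids, and the Stanley--Reisner / cover ideal of $\M$ decomposes along a vertex in a controlled way. Concretely, writing $R = R_0[y]$ with $R_0 = \kk[x_1,\dots,x_{n-1}]$, one wants to express $J = A + yB$ where $A$ is generated by the minimal generators of $J$ not involving $y$ (an ideal extended from $R_0$) and $B = (J : y)$; the key classical fact (due to Minh--Trung / Varbaro circle of ideas, and used in \cite{LM25}) is that $A$ and $B$, viewed as $R_0$-ideals, are themselves slightly mixed symbolic powers of $C$-matroidal ideals on $[n-1]$ — $A$ corresponding to a deletion-type matroid and $B$ to a contraction-type matroid — possibly with the colon monomial $N$ updated (divide out $y$ if $y \mid N$, and augment by the relevant vertices coming from the contraction). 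This is where one must be careful, since "slightly mixed" was precisely cooked up so that colons by squarefree monomials stay inside the class.

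Given such a decomposition, Cohen--Macaulayness of $J = A + yB$ would follow from a standard short exact sequence / depth argument: from $0 \to R/(A:y) \xrightarrow{\,\cdot y\,} R/A \to R/(A,y) \to 0$ and related sequences one relates $\depth R/J$ to $\depth R_0/A$, $\depth R_0/B$, and $\depth R_0/(A+B)$, using the inductive hypothesis that all three are Cohen--Macaulay of the appropriate dimensions. One needs that these dimensions line up correctly — i.e. that $\dim R/J = \dim R_0/A = \dim R_0/B + 1$ or whatever the precise numerology is — which is governed by the height statements of the matroid combinatorics (all symbolic powers of a $C$-matroidal ideal are unmixed of the same height as the ideal). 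The Auslander--Buchsbaum formula or a direct Ext/local-cohomology computation then closes the step.

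The main obstacle I anticipate is purely the combinatorial bookkeeping of step one: verifying that the two pieces $A = (J : y^\infty) \cap R_0$ and $B = (J : y)$ really are again slightly mixed symbolic powers of $C$-matroidal ideals, with the colon monomial and the matroid correctly identified. This requires understanding how $I_{\M}^{(\ell)}$ interacts with $\Del$ and $/$ at a vertex — i.e. an identity of the shape $(I_{\M}^{(\ell)} : y) = (\text{something built from } I_{\M/y})^{(\ell')} : (\text{squarefree})$ and similarly for the part avoiding $y$ — and then checking that intersecting with, or colon-ing by, the squarefree monomial $N$ is compatible with this. Once that dictionary is in place, the homological half of the argument is routine. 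A secondary, minor point is handling the cover-ideal case alongside the Stanley--Reisner case; since $C$-matroidal is defined to include both and the class of matroids is closed under duality, one expects the same argument to apply verbatim, or to reduce one case to the other.
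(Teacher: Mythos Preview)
Your plan has a genuine gap in the inductive setup. You induct on $n$ and write $J = A + yB$ with $A = (J\cap R_0)R$ and $B = J:y$, intending both pieces to be slightly mixed symbolic powers of $C$-matroidal ideals \emph{on $[n-1]$}. The $A$-piece is fine: by the identity $(I_\M^{(\ell)}:N)\cap R_0 = (I_\M^{(\ell)}\cap R_0):_{R_0}N$ (for $y\notin\supp N$) and the fact that $I_\M\cap R_0$ is the Stanley--Reisner ideal of the restriction $\M|_{[n-1]}$, the ideal $A\cap R_0$ really is a slightly mixed symbolic power in $R_0$. But the $B$-piece is not: $B = J:y = I_\M^{(\ell)}:(Ny)$ is a slightly mixed symbolic power of $I_\M$ \emph{in $R$}, not in $R_0$. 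Concretely, any associated prime $\p$ of $I_\M$ with $y\in\p$ and $a_\p\ge 2$ still contributes $\p^{a_\p-1}$ to the primary decomposition of $B$, so $y\in\supp B$ and $B$ is not extended from $R_0$. The ``contraction matroid'' heuristic does not rescue this: there is no identification of $I_\M^{(\ell)}:y$ with a symbolic power of $I_{\M/y}$ in fewer variables. Consequently the exact sequence $0\to R/B\to R/J\to R/(A,y)\to 0$ leaves you needing $R/B$ Cohen--Macaulay, and your induction on $n$ gives you nothing for $B$.

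The paper's argument repairs exactly this point by changing the induction variable and the choice of $y$. It inducts on $\deg N$, with the Minh--Trung/Varbaro theorem as the base case $N=1$, and crucially picks $y$ to be a \emph{factor of $N$} rather than an arbitrary variable. Writing $N'=N/y$, the desired ideal $I:N = (I:N'):y$ now sits on the \emph{left} of the sequence
\[
0 \longrightarrow R/(I:N) \longrightarrow R/(I:N') \longrightarrow R/(I:N',y) \longrightarrow 0,
\]
with the middle term Cohen--Macaulay by induction on $\deg N$ (same ring $R$, smaller $N$), and the right term Cohen--Macaulay because $(I:N',y) = ((I\cap R_0):_{R_0}N')R + (y)$ is governed by a slightly mixed symbolic power in $R_0$ (again smaller $\deg N'$). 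Thus both the ``stay in $R$'' piece and the ``drop to $R_0$'' piece are covered by the single induction on $\deg N$. Your scheme could be salvaged by a double induction, but as written the $B$-piece is the obstruction.
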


\begin{proof} 
We first show that taking the restriction of slightly mixed symbolic powers is the same as taking slightly mixed symbolic powers of the restriction of $J$. More generally, for any monomial ideal $I$ of the form $I=\bigcap_{\p\in \Ass(R/I)}\p^{a_\p}$, where $a_\p \in \ZZ_+$ for all $\p\in \Ass(R/I)$, any squarefree monomial $N$, and any variable $y \notin \supp{N}$, writing $R=R_0[y]$, we prove that 
$$
(\star)\qquad (I:_R N)\cap R_0 = (I\cap R_0) :_{R_0} N.
$$
To see it, let us recall that the {\em order} $O_{\p}(f)$ of an element $f\in R$ at a prime ideal $\p$ is the order of $f$ in the $\p$-adic evaluation, i.e.  $O_{\p}(f) = \max\{h\in \NN_0 : f \in \p^h\}.$ Then
$$
\begin{array}{ll}
(I:_RN)\cap R_0 & = (\bigcap_{\p \in \Ass(R/I)}\p^{\max\{a_\p - O_\p(N) ,0\}}) \cap R_0 = \bigcap_{\p \in \Ass(R/I)}(\p\cap R_0)^{\max\{a_\p - O_\p(N) ,0\}}\\
& = (\bigcap_{\p \in \Ass(R/I)}(\p\cap R_0)^{a_\p}):_{R_0}N = (I\cap R_0) :_{R_0}N,
\end{array}
$$
where the first equality in the second row holds because  $y\notin \supp{N}$, thus $O_\p(N) = O_{\p\cap R_0}(N)$ for every $\p\in \Ass(R/I)$.
\medskip

    $(2)$ Write $I := \symp{J}$. We show $I: N$ is Cohen--Macaulay by induction on $\deg N\geq 0$. If $\deg(N)=0$, then $N=1$, so $I:_R N=I$, which is Cohen--Macaulay by \cite{MT} and \cite{Var}. Next, assume $\deg N = 1$, i.e. $N = y$ for some variable $y$. By the above, we may assume $y\in \supp{J}$, so $y$ is not regular on $R/I$. Since $R/I$ is Cohen--Macaulay, then $\h((I,y))=\h(I)$.   Consider the exact sequence 
    $$
    0 \to R/(I :_R y) \to R/I \to R/(I,y) \to 0.
    $$
    As $R/I$ is Cohen--Macaulay and $\h(I,y)=\h(I)$, we see that $R/(I:_Ry)$ is Cohen--Macaulay if and only if $R/(I,y)$ is Cohen--Macaulay. Now, we observe that $(I,y) = (I\cap R_0)R + (y)=(J\cap R_0)^{(\ell)}R + (y)$. 
 The ideal $J\cap R_0$ is $C$-matroidal, because if we write $y=x_i$ for some $i$, and $J$ as the Stanley--Reisner ideal of a matroid $\M$, then $J\cap R_0$ is the Stanley--Reisner ideal  of the restriction of $\M$ to $[n]-\{i\}$. Therefore, $I\cap R_0$ is Cohen--Macaulay, again by \cite{MT} and \cite{Var}. Since $y$ is regular on $(I\cap R_0)R$, the result follows.

    If $\deg(N)>1$, we write $N':=N/y$ for some $y\in \supp{N}$. Since $N'$ is squarefree, then $y\notin \supp{N'}$. Note that $ I :_R N = (I :_R N') :_R y$. Consider the exact sequence $$0 \to R/((I :_R N') :_R y) \to R/(I :_R N') \to R/(I :_R N',y) \to 0.
    $$

        By induction $R/(I :_R N')$ is Cohen--Macaulay, hence it suffices to show that $R/(I : N',y)$ is Cohen--Macaulay. We write 
        $(I :_R N',y) = ((I :_R N')\cap R_0)R + (y)$, which by ($\star$), equals $((I\cap R_0) :_{R_0} N')R + (y)$. Since  $(I\cap R_0) :_{R_0} N'$ is a slightly mixed symbolic power of a \Cmatroid ideal, then it is Cohen--Macaulay by induction on deg $N'$. Again, we are done because $y$ is a non-zero divisor on $((I : N')\cap R_0)R$ .
\end{proof}

We can now prove the main theorem of this paper.
\begin{Theorem}\label{Locally-Glicci}
Let $\kk$ be an infinite field, $R = \kk[x_1,...,x_n]$ and $\m = (x_1,...,x_{n})$. For any \Cmatroidal ideal $\wdt{J}$, every slightly mixed symbolic power of $\wdt{J}$ is glicci in $R_\m$.  In particular all symbolic powers of \Cmatroid ideals are glicci in $R_\m$.
\end{Theorem}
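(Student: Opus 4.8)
The plan is to prove Theorem~\ref{Locally-Glicci} by induction on the number of variables $n$, using the basic double $G$-linkage result \Cref{G-Double-Link} at each step to strip off one variable, while using the lifting technology of \Cref{Sec-3} to circumvent the failure of generic Gorensteinness. Let $J$ be a slightly mixed symbolic power of a \Cmatroidal ideal $\wdt J$; by \Cref{Mix-Sym-CM} we know $J$ is Cohen--Macaulay, hence unmixed. If $n = 0$ or $J$ is a complete intersection (e.g.\ the zero ideal, or $\m$-primary and handled by Huneke--Ulrich \cite{HU07}, or simply trivial) we are done; otherwise pick a variable $y$ such that $J = I_0 + yK$ where $I_0$ is the ideal generated by the monomial generators of $J$ not divisible by $y$ and $K$ is the ``colon'' ideal. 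The key structural input is that $I_0$, living in $R_0 = \kk[x_1,\dots,x_{n-1}]$, is again a slightly mixed symbolic power of a \Cmatroidal ideal (this is exactly the computation $(I,y) = (J\cap R_0)R + (y)$ appearing in the proof of \Cref{Mix-Sym-CM}, together with the observation there that restricting a matroid keeps it a matroid); hence $I_0$ is Cohen--Macaulay and unmixed by induction.

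**The crucial step** is to replace $I_0$ by a \emph{generically Gorenstein} Cohen--Macaulay ideal with the same reduction. Here is where the lifting machinery enters: one would like $I_0$ itself to be generically Gorenstein, but as the introduction stresses, it essentially never is. Instead, I would take $I'$ to be a lift of $I_0$ relative to the ideal $K$ in the sense of \Cref{Def-lift}, choosing $N \geq \max(I_0)$ and a matrix $B$ satisfying $(\star)$ — this is possible since $\kk$ is infinite. By \Cref{Lift-Algebraic-Properties}(1), $I'_\m$ has the same Betti numbers as $I_0$, so $R_\m/I'_\m$ is Cohen--Macaulay; by (2), $y$ is regular on $R_\m/I'_\m$; and by \Cref{Rmk-Basic-Facts-Lift}(iii), $(I',y) = (I_0,y)$, so that $I' + yK = I_0 + yK = J$ locally (one must check the generators of $K$ match up: this is where relative-to-$K$ lifting, rather than the classical homogeneous lift, is essential). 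The point is \Cref{Lift-Algebraic-Properties}(\ref{lift-is-radical}): since $I_0$ is unmixed and — crucially — $\h(K) > \h(I_0)$, the lift $I'$ is a \emph{radical} ideal. A radical ideal that is Cohen--Macaulay is generically a product of fields, hence generically Gorenstein. Thus $R_\m/I'_\m$ is Cohen--Macaulay and generically Gorenstein, $y \in \m$ is regular on it, and $J = I' + yK$ is unmixed (again by \Cref{Mix-Sym-CM}) of height strictly greater than $\h(I') = \h(I_0)$ (the height jumps because $y$ divides every new generator, as in \Cref{Prop-x-regular}(1)).

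**Applying \Cref{G-Double-Link}** then gives that $J_\m = (I' + yK)_\m$ is Gorenstein linked in two steps to $(I' + K)_\m$. It remains to argue that $(I' + K)_\m$ is glicci, and here one reduces to a situation with $y$ effectively removed: $I' + K$ contains $y$ (since $yK \subseteq J \subseteq I'+K$ forces... no — rather one observes $(I'+K, y) = (I_0 + K, y)$ and $I_0 + K$ already is a slightly mixed symbolic power of a \Cmatroidal ideal in one fewer variable). More precisely, $I' + K$ should be shown to be, up to the isomorphism $R/(y) \cong R_0$, a slightly mixed symbolic power of a \Cmatroidal ideal on $[n-1]$ — this uses that $K$ itself is a slightly mixed symbolic power (one can write $K = J^{(\ell)} : (N y)$-type colon, an ideal in $R_0$ after passing mod $y$), and that $I_0 + K$ corresponds to a deletion/contraction combination of the original matroid. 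By the inductive hypothesis $(I_0 + K)_\m$ is glicci, and since $G$-linkage is preserved under the faithfully flat extension and $y$ is a regular parameter, $(I' + K)_\m$ is glicci; chaining the two $G$-links, $J_\m$ is glicci.

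**The main obstacle** I anticipate is the bookkeeping in the last paragraph: verifying that $K$ (the colon ideal giving the $y$-part of $J$) and $I_0 + K$ are themselves slightly mixed symbolic powers of \Cmatroidal ideals, so that the induction closes cleanly. Matroid restriction (deletion) was already handled in \Cref{Mix-Sym-CM}, but identifying the ideal governing the coefficient of $y$ with a \emph{contraction}-type operation on $\M$, and checking that the colon by $N$ interacts correctly, requires care — in particular one must confirm that $J^{(\ell)} : y$ decomposes as $I_0 + y K$ with $K$ of the right form and $\h(K) > \h(I_0)$, the height inequality being exactly what powers \Cref{Lift-Algebraic-Properties}(\ref{lift-is-radical}). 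A secondary subtlety is ensuring all the ideals involved genuinely lie in $\m$ (so that we stay in the local ring $R_\m$ where the non-homogeneous lifts make sense), which is guaranteed by \Cref{Rmk-Basic-Facts-Lift}(i) but should be tracked through each link.
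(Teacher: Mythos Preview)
Your overall architecture is right — lift $I_0$ relative to $K$, get a radical Cohen--Macaulay $I'$, apply \Cref{G-Double-Link} — but the induction does not close the way you propose, and the ``main obstacle'' you flag is in fact a genuine gap.

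The missing observation is that $I' \subseteq K$. Indeed, for any monomial $M \in G(I_0)$ the lifted generator $\psi(M) = \prod (x_i + y f_{i,j})$ expands as $M$ plus $y$ times a polynomial each of whose terms contains some $f_{i,j} \in K$; hence $\psi(M) \in I_0 + yK = J \subseteq J:y = K$. Consequently $I' + K = K$, and the double link sends $J_\m$ directly to $K_\m$. There is no need to analyse a hybrid object ``$I_0 + K$'' or to invoke deletion/contraction combinations — that bookkeeping simply evaporates.

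This also explains why the induction should not be on $n$. The ideal $K = J:y = \wdt{J}^{(\ell)} : (Ny)$ is again a slightly mixed symbolic power \emph{in the same ring} $R$ (note $Ny$ is squarefree since $y \notin \supp N$); it does not live in $R_0$, and your attempt to pass to $R/(y)$ and lift glicci-ness back up does not work as stated. What \emph{does} decrease is the quantity $\sum_{\p \in \Ass(R/\wdt{J})} a_\p$ where $a_\p = \max\{0,\ell - O_\p(N)\}$: since $y \in \supp J$, coloning by $y$ strictly drops at least one $a_\p$. The paper inducts on this sum; the base case $\sum a_\p = 1$ is a complete intersection, and the case $N = x_1\cdots x_n$ reduces to a lower symbolic power. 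With this induction and the identity $I'+K = K$, the proof closes in one line after the double link.
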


\begin{proof} Let $J = \symp{\wdt{J}} : N$, and let $N' = \prod_{x_i\in \supp{N}\cap \supp{J}} x_i$. Then $\symp{\wdt{J}} : N = \symp{\wdt{J}} : N'$, so throughout we can always assume $\supp{N} \subseteq \supp{J}$.  

Now, $J$ has the following primary decomposition 
$$
J = \bigcap_{\p \in \Ass(R/\wdt{J})} \p^{\max\{0, \ell- O_\p(N)\}}.
$$
Let $a_\p:=\max\{0, \ell- O_\p(N)\}\in \NN_0$. We induct on $\sum_{\p \in \Ass(R/\wdt{J})} a_\p\geq 1$. If $\sum_{\p \in \Ass(R/\wdt{J})} a_\p = 1$, then $J$ is a complete intersection so there is nothing to show.

Assume that $\sum_{\p \in \Ass(R/\wdt{J})} a_\p >  1$. First consider the case where $N = x_1\cdots x_n$. If $\ell\leq \h(\wdt{J})$, then $J=R$, and if $\ell> \h(\wdt{J})$ then $J = \wdt{J}^{(\ell - \h\, J)}$, which is glicci in $R_\m$ by induction. 

We may then assume that there exists a variable $y \notin \supp N$. We write 
$$
J = I + yK,  \qquad \text{ where } \; K = J : y \; \textrm{ and } \; I = (J\cap R_0)R.
$$
Note that $K \supseteq J$ hence $\h \, K \geq \h \, J > \h \, I = c - 1$. Let $I'$ be a lift of $I$ relative to $K$. From the construction, it follows that $I + yK = I' + yK$. Also, $I' \subseteq J \subseteq K$. By \cref{Mix-Sym-CM} $I$ is Cohen--Macaulay and then, by \Cref{Lift-Algebraic-Properties}(1), $I'$ is Cohen--Macaulay too. As $\h\, K > \h \, I$ we may then apply \Cref{Lift-Algebraic-Properties}(4) to obtain that $I'$ is radical. In particular, $I'$ is generically Gorenstein.  As $I' \subseteq K$, applying \cref{G-Double-Link} we find that $J_\m = (I + yK)_\m = (I' + yK)_\m$ is G-linked in two steps to $(I' + K)_\m = K_\m$ in $R_\m$. Finally we argue that $K_\m$ is glicci. Recall $K = J : y$, hence 
$$
K = \bigcap_{\p \in \Ass(R/\wdt{J})} \p^{\max\{0, a_\p'\}},
$$
where $a_\p'=a_\p - O_\p(y)$. Since $y\in \supp{J}$, then $a_\p'<a_\p$ for some $\p \in \Ass(R/\wdt{J})$. Thus, by induction, $K_\m$ is glicci, and therefore $J_\m$ is glicci too.\end{proof}

\bibliographystyle{abbrv}
\bibliography{GlicciMatroids}

\end{document}